\documentclass[12pt]{amsart}

% for accents
\usepackage[utf8]{inputenc}
\usepackage[T1]{fontenc}

\usepackage[dvipsnames]{xcolor}
\usepackage{amsmath, amsfonts, amsthm, amssymb}
\usepackage[top=1in, bottom=1in, left=1in, right=1in]{geometry}
\usepackage[backref=page, colorlinks=true, allcolors=blue]{hyperref}
\usepackage[alphabetic,lite,backrefs]{amsrefs} % for bibliography 
\usepackage{tikz}
\usepackage{changepage}
\usepackage{mathrsfs}
\usepackage{tikz-cd}
\usepackage{mathtools} % just for defining the symbol :=
\usepackage{bm} %Need this for bold math
 
\usepackage{enumitem} %% Need this to label individual items in enumerate
%%% Todo list set up %%%

\usepackage[colorinlistoftodos,prependcaption,textsize=scriptsize]{todonotes}

\newcommand{\psuedo}[1]{\todo[disable]{#1}}
\makeatletter
\providecommand\@dotsep{5}
\renewcommand{\listoftodos}[1][\@todonotes@todolistname]{%
  \@starttoc{tdo}{#1}}
\makeatother

%%% Theorem Environments %%%
\theoremstyle{plain}
	\newtheorem{theorem}{Theorem}[section]
	\newtheorem{lemma}[theorem]{Lemma}
    
    \newtheorem{proposition}[theorem]{Proposition}
    
    \newtheorem{fact}[theorem]{Fact}
\theoremstyle{definition}
    \newtheorem{defn}[theorem]{Definition}
    \newtheorem*{defn*}{Definition}
    \newtheorem{example}[theorem]{Example}
    \newtheorem*{example*}{Example}
    
\theoremstyle{remark}
	\newtheorem{remark}[theorem]{Remark}
	\newtheorem*{remark*}{Remark}

%The below gives us a command \Set{ a\inA \given a<2} that writes set builder notation. \Set* pairs appropriately sized delimiters
\usepackage{mathtools}
\newcommand\SetSymbol[1][]{\nonscript\:#1\vert\allowbreak\nonscript\:\mathopen{}}
\providecommand\given{} % to make it exist
\DeclarePairedDelimiterX\Set[1]\{\}{\renewcommand\given{\SetSymbol[\delimsize]}#1}

% Other math shortcuts %

\newcommand{\ms}[1]{\mathscr{#1}}
\newcommand{\mc}[1]{\mathcal{#1}}
\newcommand{\coker}[0]{\text{coker }}

\newcommand{\res}[2][]{\left.{#2}\right|_{#1}}

\def\r{{\bm{r}}}
\def\m{{\bm{m}}}

\def\Z{{\mathbb{Z}}}
\def\R{{\textbf{R}}}
\def\N{{\mathbb{N}}}
\def\P{{\mathbb{P}}}
\def\k{{\Bbbk}}
\def\F{{\ms F}}

\def\L{{\ms{L}}}
\def\M{{\mc{M}}}
\def\A{{\mc{A}}}
\def\I{{\mc{I}}}
\def\E{{\mathscr{E}}}

\def\N{{\mathbb N}}

\def\O{{\mathcal O}}

\DeclareMathOperator{\Pic}{Pic}

\DeclareMathOperator{\rank}{rank}
\DeclareMathOperator{\reg}{reg}
\DeclareMathOperator{\Fitt}{Fitt}
\DeclareMathOperator{\Supp}{Supp}
\DeclareMathOperator{\Sym}{Sym}
\DeclareMathOperator{\Hom}{Hom}
\renewcommand{\Im}{\text{Im}\,}
\newcommand{\inj}{\hookrightarrow}

%\begin{adjustwidth}{3em}{3em}

\title{Syzygies of Curves in Products of Projective Spaces}
\keywords{multigraded regularity, virtual resolutions, syzygies}
%\subjclass{13D02, 14M25}
% \date{August 20, 2021}

\author{John Cobb}
\address{Department of Mathematics, University of Wisconsin, Madison, WI}
\email{jcobb2@math.wisc.edu}

\begin{document}

\maketitle
\vspace{-0.3in}
\begin{abstract}
    Motivated by toric geometry, we lift machinery for understanding the syzygies of curves in projective space to the setting of products of projective spaces. Using this machinery, we show an analogue of an influential result of Gruson, Peskine, and Lazarsfeld that gives a bound on the regularity of a possibly singular curve given its degree and the dimension of the ambient projective space. To do so, we show new results linking the shape of multigraded resolutions of a sheaf to its regularity region.
\end{abstract}

\setcounter{section}{-1}

\section*{Introduction}
\renewcommand*{\thetheorem}{\Alph{theorem}}

Understanding the syzygies among the equations defining a projective variety continues to be instrumental in understanding their properties. Classical theorems investigating when a variety is projectively normal or cut out by quadrics now fit into a more general syzygy program \cite{Green1984KoszulCA}, and even partial information about a variety's syzygies (e.g. their ranks or degrees) can give valuable geometric information. A celebrated result of Gruson, Peskine, and Lazarsfeld gives optimal bounds on how ``algebraically complicated" the syzygies of a degree $d$ curve $C\subseteq \P^r$ can be in terms of $r$ and $d$ \cite{gruson1983theorem}. This result settled classical questions of both Castelnuovo and Mumford and gave initial evidence towards the Eisenbud-Goto Conjecture, at least when the singularities are controlled. More importantly, the techniques and ideas developed in Gruson, Lazarsfeld, and Peskine's work were utilized either explicitly or implicitly to prove a number of other statements -- e.g. in a simple proof of Petri's theorem \cite{SimpleproofPetri} and consequently in the geometric syzygy conjecture in even genus \cite{Kemeny2020UniversalSB}, and in criteria for $N_p$ conditions on line bundles on smooth complex projective varieties \cite{Ein1993SyzygiesAK}. 

In recent years, several researchers have aimed to extend syzygy tools to varieties embedded in more general toric varieties \cites{Hoffman2002CastelnuovoMumfordRI, MaclaganSmith04, SidmanVantuyl06, Costa2006mBlocksCA, Hering2006SyzygiesMR, Huy07,HuyStrunk07, Hering10, Yang19, Bruce19, ChardinNemati20,berkesch2017virtual, Bruce20, BrownErman21, BrownSayrafi22}. One of the main innovations in \cite{gruson1983theorem} is recognizing that the positivity of the \textit{kernel bundle} coming from $C \inj \P^r$ controls a complex that nearly resolves the ideal sheaf $\I_C$ and thus governs the curve syzygies. For a recent example, this was critical to classify the singularities and study syzygies of the secant varieties of smooth curves \cite{Ein_2020}. The purpose of this paper is to lift this machinery to the case of products of projective spaces and as an example, use it to prove a bound on regularity for an integral curve in an arbitrary product of projective spaces (Proposition \ref{regularityCor}). Such a bound has been explored prior; Lozovanu gave a bound on the regularity on a smooth curve in a product of two projective spaces, as long as neither are $\P^1$ \cite{Lozovanu2008RegularityOS}.

We write $\P^\r \coloneqq \P^{r_1} \times \cdots \times \P^{r_n}$ for the product of projective spaces over the algebraically closed field $\k$, where $\r = (r_1, \dots, r_n) \in \N^n$. We take the magnitude of $\r$ to be $|\r|=r_1 + \cdots + r_n$. For any $\m = (m_1,\dots, m_n)\in \Pic(\P^\r) = \Z^n$ we denote the corresponding line bundle $\O_{\P^{r_1}}(m_1) \boxtimes \cdots \boxtimes \O_{\P^{r_n}}(m_n)$ by $\O_{\P^\r}(\m)$. Stated in the smooth case, our main result gives a resolution of the ideal sheaf of a curve embedded in a product of projective spaces:
\begin{theorem}\label{maintheorem}
    Let $C\subseteq \P^\r$ $(n>1)$ be a smooth nondegenerate curve of multidegree $\bm{d} = (d_1,\dots, d_n)\in \N^n$. Setting $a \coloneqq \max\Set*{d_i+d_j - r_i - r_j \given i\neq j} + 2$, then the ideal sheaf $\I_C$ admits a multigraded free resolution:
    \begin{equation*}\tag{$*$}
        \cdots \to \E_2 \to \E_1 \to \E_0 \to \I_C \to 0
    \end{equation*}
    such that each $\E_i$ is a direct sums of line bundles $\O_{\P^\r}(-\m)$ with $\m\in \N^n$ such that $|\m| = a + i$. In particular, the curve $C$ is cut out scheme-theoretically by multigraded polynomials of multidegree of magnitude at most $a$.
\end{theorem}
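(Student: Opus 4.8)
\emph{Overall strategy.} The plan is to transplant the argument of Gruson--Lazarsfeld--Peskine \cite{gruson1983theorem}, which rests on two pillars: a dictionary converting vanishing of twisted cohomology into the \emph{shape} of a free resolution, and a positivity input supplied by the \emph{kernel bundle} of the embedding. First I would establish the multigraded form of the dictionary: a coherent sheaf $\F$ on $\P^\r$ whose multigraded regularity region (in the sense of Maclagan--Smith \cite{MaclaganSmith04}) is sufficiently large admits a virtual free resolution \cite{berkesch2017virtual} in which the $i$-th syzygy module is generated in a single magnitude, and moreover one may take every generating line bundle to be $\O_{\P^\r}(-\m)$ with $\m \in \N^n$ once $H^0(\P^\r, \F(\m)) = 0$ for $\m \notin \N^n$ (which holds for $\F = \I_C$). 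Sheafifying such a virtual resolution then produces a complex of the form $(*)$. With this reduction, Theorem~\ref{maintheorem} becomes the assertion that $\reg \I_C$ contains the point forcing magnitude $a$ at step $0$, i.e.\ a family of vanishings $H^1(\P^\r, \I_C(\m)) = 0$ and $H^{\ge 2}(\P^\r, \I_C(\m)) = 0$ for $\m$ in explicit ranges pinned down by $a$.

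\emph{The kernel bundle and the role of pairs.} Since $\dim C = 1$, the higher cohomology of $\I_C(\m)$ is concentrated, in the range of interest, in cohomological degrees $\le 2$, and I would argue that the needed vanishings are governed two factors at a time: the image $C_{ij}$ of $C$ under the projection $\P^\r \to \P^{r_i}\times\P^{r_j}$ is a nondegenerate curve of bidegree $(d_i,d_j)$, and the relevant cohomology of $\I_C$ is controlled, via K\"unneth together with the low cohomological dimension, by that of the $\I_{C_{ij}}$. This localizes the work to the two-factor case, where one extends Lozovanu's regularity bound \cite{Lozovanu2008RegularityOS} uniformly over all pairs (dropping his hypothesis that no factor is $\P^1$, and upgrading from the regularity number to the full resolution). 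There, following GLP, set $L_i := \O_C(\bm{e}_i)$, a globally generated line bundle of degree $d_i$, and $M_{L_i} := \ker\bigl(H^0(C,L_i)\otimes\O_C \to L_i\bigr)$; the embedding presents $\I_C$ through an Eagon--Northcott/Koszul-type complex of sheaves built by iterated extension from the exterior powers $\wedge^p M_{L_i}$ and $\wedge^p M_{L_j}$, suitably twisted. After twisting into nonnegative multidegrees and pushing the pieces to the ambient space, the terms of this complex are precisely direct sums of $\O_{\P^\r}(-\m)$ of the magnitudes claimed --- \emph{provided} each $\wedge^p M_{L_i}$ is positive enough that the twisted $H^1$'s that measure the failure of exactness vanish.

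\emph{Positivity, and where $a$ comes from.} The crux is this last vanishing, and it is where I expect the main obstacle to lie. Nondegeneracy of $C_{ij}$ in $\P^{r_i}\times\P^{r_j}$ yields, through a Castelnuovo-type estimate applied in each factor, a bound on the genus of $C$ in terms of $d_i, d_j, r_i, r_j$; this makes any line bundle on $C$ of degree exceeding roughly $d_i + d_j - r_i - r_j$ nonspecial, and --- propagating this through the GLP induction on the positivity of exterior powers of kernel bundles --- gives exactly the vanishing $H^1(C, \wedge^p M_{L_i} \otimes \O_C(\bm{b})) = 0$ in the range needed for magnitude $a+i$ at step $i$, the ``$\max$ over $i\ne j$'' recording the worst pair and the ``$+2$'' the usual nonspeciality-plus-base-point-freedom slack. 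The difficulty is to make this pairwise positivity sharp and to carry it simultaneously across all exterior powers and all twists, keeping careful track that ``magnitude'' is the correct one-dimensional statistic through which everything factors; the Koszul-cohomology step above also quietly needs some low-magnitude regularity of $C_{ij}$ (an ``enough projective normality'' statement) that has to be verified. Granting these inputs, the dictionary from the first step assembles $(*)$ with $|\m| = a+i$ and $\m \in \N^n$, and reading off $\E_0 \surj \I_C$ gives the scheme-theoretic conclusion. Although I have described the smooth case, the same kernel-bundle estimates apply to integral $C$ and are what feed into Proposition~\ref{regularityCor}.
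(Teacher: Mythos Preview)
Your proposal has the right ingredients in view --- kernel bundles and an Eagon--Northcott complex --- but it runs in the wrong direction and the positivity input you rely on is false.

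\textbf{Direction of the argument.} You plan to first bound $\reg(\I_C)$ and then invoke a dictionary producing a resolution with $i$-th syzygies concentrated in a single magnitude. No such dictionary is available in the multigraded setting: regularity gives at best magnitudes $\le a+i$, not $= a+i$, and even that is delicate. The paper does the opposite: it \emph{constructs} the resolution directly and only afterwards reads off regularity (Proposition~\ref{regularityCor}) via Theorem~\ref{lineartoreg}. The construction is: pull back the tensor product of Be\u{i}linson resolutions of the diagonals $\Delta_i \subset \P^{r_i}\times\P^{r_i}$ along $p\times\mathrm{id}$ to resolve the graph $\Gamma_p \subset C\times\P^\r$; tensor by $\pi^*\A$ for an auxiliary line bundle $\A$ on $C$; and push forward to $\P^\r$. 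The resulting spectral sequence has $E_1^{\ell,q}=\bigoplus_{|\m|=\ell}\O_{\P^\r}(-\m)^{h^q_\A(\m)}$ and converges to $p_*\A$. If $h^1_\A(\m)=0$ for all $|\m|=2$, the bottom row yields a \emph{linear presentation} $\bigoplus_{|\m|=1}\O_{\P^\r}(-\m)^{h^0_\A(\m)}\stackrel{u}{\to}\O_{\P^\r}^{h^0(\A)}\to p_*\A\to 0$, and the Eagon--Northcott complex of $u$ is the claimed $(\ast)$ resolving $\Fitt(p_*\A)=\I_C$. The magnitudes $a+i$ are automatic: the $i$-th Eagon--Northcott term sits in magnitude $h^0(C,\A)+i$, and one arranges $a=h^0(C,\A)$.

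\textbf{Where $a$ comes from.} Your proposed source of vanishing --- a Castelnuovo bound on $g$ making \emph{every} line bundle of degree $\gtrsim d_i+d_j-r_i-r_j$ nonspecial --- fails: Castelnuovo bounds are quadratic in the degree, so $g$ can vastly exceed that linear quantity (already a smooth plane curve of degree $d$ has $g=\binom{d-1}{2}$). The paper's mechanism does not bound $g$ at all. One takes $\A$ to be a \emph{generic} line bundle of degree $g+d_i+d_j-r_i-r_j+1$ (for the maximizing pair $i\neq j$). Each $\M_k=p^*\Omega_{\P^{r_k}}(1)$ is filtered by line bundles of strictly negative degree summing to $-d_k$, so any product of graded pieces $Q_1^\alpha\otimes Q_2^\beta$ has degree $\ge r_1+r_2-d_1-d_2-2$; tensoring with $\A$ lands in degree $\ge g-1$, where a generic line bundle is nonspecial, and this gives $h^1_\A(\bm{e}_i+\bm{e}_j)=0$. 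Riemann--Roch then gives $h^0(C,\A)=\deg\A+1-g=d_i+d_j-r_i-r_j+2=a$: the genus \emph{cancels} rather than being bounded. Note also that the ``$\max$ over $i\ne j$'' arises not from projecting to $C_{ij}$ --- your K\"unneth reduction is unjustified, since $\I_C$ is not a box product and the projections $C\to C_{ij}$ need not be isomorphisms --- but simply because the spectral sequence requires $h^1_\A(\m)=0$ for every $\m$ with $|\m|=2$, and among the $\m=\bm{e}_i+\bm{e}_j$ with $i\ne j$ the one imposing the strongest condition on $\deg\A$ is the one maximizing $d_i+d_j-r_i-r_j$.
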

\begin{example*}
    Let $p:\P^1\inj \P^2\times \P^2$ be given by
    \begin{equation*}
        p([s:t]) = [t^2s-4s^3:t^3-4s^2t:t^2s-3s^3] \times [s^2t - t^3:s^3-st^2:t^3].
    \end{equation*}
    This is nondegenerate in each factor and defines a curve $C$ of degree $(3,3)$. Theorem \ref{maintheorem} gives us an explicit resolution
    \begin{equation*} \scriptsize
        \cdots 
        \to
        \begin{matrix}
            \O_{\P^2\times \P^2}(-5,-2)^{10}\\
            \oplus\\
            \O_{\P^2\times \P^2}(-4,-3)^{50}\\
            \oplus\\
            \O_{\P^2\times \P^2}(-3,-4)^{50}\\
            \oplus\\
            \O_{\P^2\times \P^2}(-2,-5)^{10}
        \end{matrix}
        \to
        \begin{matrix}
            \O_{\P^2\times \P^2}(-5,-1)^{5}\\
            \oplus\\
            \O_{\P^2\times \P^2}(-4,-2)^{50}\\
            \oplus\\
            \O_{\P^2\times \P^2}(-3,-3)^{100}\\
            \oplus\\
            \O_{\P^2\times \P^2}(-2,-4)^{50}\\
            \oplus\\
            \O_{\P^2\times \P^2}(-1,-5)^{5}
        \end{matrix}
        \to 
        \begin{matrix}
            \O_{\P^2\times \P^2}(-5,0)\\
            \oplus\\
            \O_{\P^2\times \P^2}(-4,-1)^{25}\\
            \oplus\\
            \O_{\P^2\times \P^2}(-3,-2)^{100}\\
            \oplus\\
            \O_{\P^2\times \P^2}(-2,-3)^{100}\\
            \oplus\\
            \O_{\P^2\times \P^2}(-1,-4)^{25}\\
            \oplus\\
            \O_{\P^2\times \P^2}(0,-5)
        \end{matrix}
        \to 
        \begin{matrix}
            \O_{\P^2\times \P^2}(-4,0)^{5}\\
            \oplus\\
            \O_{\P^2\times \P^2}(-3,-1)^{50}\\
            \oplus\\
            \O_{\P^2\times \P^2}(-2,-2)^{100}\\
            \oplus\\
            \O_{\P^2\times \P^2}(-1,-3)^{50}\\
            \oplus\\
            \O_{\P^2\times \P^2}(0,-4)^{5} \\
        \end{matrix}
        \to \I_C \to 0.
    \end{equation*}
    The first surjection immediately implies that $C$ can be cut out scheme-theoretically by equations of magnitude $\leq 4$.
\end{example*}
\psuedo{To apply Theorem \ref{maintheorem}, we wish to find a line bundle $\O_{\P^1}(d)$ on $\P^1$ witnessing the vanishings
    \begin{equation*}
        H^1(\P^1, p^*\O_{\P^2}(-1) \otimes \O_{\P^1}(d)) = H^1(\P^1, p^*\Omega_{\P^2}(1) \otimes p^*\Omega_{\P^2}(1) \otimes \O_{\P^1}(d)) = 0.
    \end{equation*}
    Leveraging our understanding of line bundles on $\P^1$, we can explicitly compute the pullbacks above. Since $p$ has degree 3, $p^*\O_{\P^2}(-1) = \O_{\P^1}(-3)$. To compute $p^*\Omega_{\P^2}(1)$, recall that it fits into the pullback of the Euler sequence on $\P^2$:
    \begin{equation*}
        0 \to p^*\Omega_{\P^2}(1) \to \O_{\P^1}^{\oplus 3} \to \O_{\P^1}(3) \to 0
    \end{equation*}
    This establishes $c_1(\Omega_{\P^2}(1)) = -3$. On the other hand, we have the exact sequence 
    \[ 0 \to \O_{\P^1}(-2) \to p^* \Omega_{\P^2}(1) \to \O_{\P^1}(-1) \to 0 \] 
    where $\O_{\P^1}(-2) = \I_C/\I_C^2$ is the conormal sheaf associated to the map $p$. This short exact sequence necessarily splits, given that the extension class lives in 
    \[ H^1(\P^1, \Hom(\O_{\P^1}(-1), \O_{\P^1}(-2))) = H^1(\P^1, \O_{\P^1}(-1)) = 0. \]
    Thus $p^*\Omega_{\P^2}(1) = \O_{\P^1}(-2) \oplus \O_{\P^1}(-1)$. Plugging these calculations back into our desired vanishing statements, one can check that choosing $d=3$ suffices. } 

Theorem \ref{maintheorem} generalizes the main construction appearing in the proof of \cite{gruson1983theorem}*{Proposition 1.2} and gets a bound as in \cite{gruson1983theorem}*{Lemma 1.7}. As will be implied by Proposition \ref{maintheoremsingular}, the final observation in Theorem \ref{maintheorem} holds even if $C$ is singular in which case $(*)$ is a complex that locally resolves $\I_C$ off the singular locus of $C$. The proof strategy shares a similar outline to \cite{gruson1983theorem}, although new roadblocks are introduced in the multigraded setting. Supposing that $C$ is smooth, the essential idea is to express $\O_C$ as the cokernel of a linear map. Having done this, the associated Eagon-Northcott complex gives the resolution $(*)$. The main hurdle is constructing this linear map -- it is realized as a derived pushforward of a resolution of the graph of $C \inj \P^\r$ obtained by restricting a product of Be\u{i}linson's resolutions. As in \cite{gruson1983theorem}, the critical observation is that the exactness of the terms in the derived pushforward depends only on the cohomological vanishing of mixed wedge products of a multigraded kernel bundle. If $C$ is not smooth, we may use essentially the same argument on its smooth normalization. The new analysis required to bound $a$ is why the case of projective space ($n=1$) is specifically excluded from Theorem \ref{maintheorem}. As we will note in Remark \ref{Why statement doesnt reduce to n=1}, this is because computing the derived pushforward mentioned before involves computing the first two terms of a Be\u{i}linson spectral sequence, and the second of those terms behaves differently in the cases $n = 1$ and $n > 1$. 

It is natural to ask whether something similar to Theorem \ref{maintheorem} could be investigated for curves in more general toric varieties. There are several obstacles to such an extension. Most notably, there is not a strong multigraded analogue of Be\u{i}linson's resolution in the literature. Recent work of Hanlon-Hicks-Lazarev provides a generalization of Be\u{i}linson's resolution to all toric stacks, but they are not as explicit or simple \cites{hanlon2023resolutions,brown2023short}. Moreover, the resolution in $(*)$ results from taking the associated Eagon-Northcott of a linear matrix, and in the general toric case there are several potential notions of linearity to consider \cite{brown2023linear}.

In the case of standard projective space, the existence of a linear resolution as in Theorem \ref{maintheorem} is equivalent to a certain bound on regularity. In the case of a product of $n$ projective spaces, the standard notion of (multigraded) regularity introduced by Maclagan and Smith is a region inside $\Pic(\P^\r) = \Z^n$ given as a union of cones of type $\m + \N^n$ (see Definition \ref{multigradedregdef}) \cite{MaclaganSmith04}. As has been shown in many previous works, the connection between the shape of free resolutions and regularity is much weaker in the multigraded case \cites{Eisenbud2014TateRF, berkesch2017virtual,SayrafiBruceMultigradedRegularity, BotbolChardin17,ChardinHolanda}. %Thus Theorem \ref{maintheorem} is quite distinct from Lozovanu's work which focused solely on multigraded regularity \cite{Lozovanu2008RegularityOS}. 
In \S \ref{Multigraded Regularity section}, new results linking the shape of resolutions to multigraded regularity are proven. In particular, by applying Theorem \ref{lineartoreg} to the complex $(*)$ we obtain a bound on multigraded regularity akin to the main result of \cite{gruson1983theorem}*{Theorem 1.1}:
\begin{proposition}\label{regularityCor}
    Let $C\subseteq \P^\r$ $(n>1)$ is an integral nondegenerate curve of degree $\bm{d}$. Setting the constant $a$ as in Theorem \ref{maintheorem}, then as long as $\P^\r\neq \P^1\times \P^1$ we have 
    \begin{equation*}
        (a_1,\dots, a_n) + \N^n\subseteq \reg(\I_C)
    \end{equation*} 
    where $a_k = \min\{r_ka-d_k,a\}$.
\end{proposition}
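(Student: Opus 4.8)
\emph{Proof proposal.} The plan is to feed the complex $(*)$ into Theorem \ref{lineartoreg}. Since $C$ is only assumed integral, I would first replace Theorem \ref{maintheorem} by its singular counterpart Proposition \ref{maintheoremsingular}, which still furnishes a complex $\E_\bullet$ with $\E_i=\bigoplus\O_{\P^\r}(-\m)$ over finitely many $\m\in\N^n$ with $|\m|=a+i$, now exact only away from the zero-dimensional singular locus of $C$. The failure of exactness costs nothing for a regularity computation: the homology sheaves in positive homological degree, together with the kernel of $\E_0\surj\I_C$, are supported in dimension zero and so have no higher cohomology, so comparing the two hypercohomology spectral sequences of $\E_\bullet(\bm{b})$ gives $H^p(\I_C(\bm{b}))\cong\mathbb{H}^p(\E_\bullet(\bm{b}))$ for $p\ge 1$, whence $H^p(\I_C(\bm{b}))=0$ as soon as $H^{p+l}(\E_l(\bm{b}))=0$ for all $l\ge 0$. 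Because $H^{p+l}(\E_l(\bm{b}))$ vanishes automatically once $l>|\r|-p$, only finitely many terms of $\E_\bullet$ ever contribute to a given $H^p$; this is precisely the mechanism that makes a resolution which is linear in magnitude useful, and it is packaged by Theorem \ref{lineartoreg}, which I would apply to $\E_\bullet$.

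To read off the region Theorem \ref{lineartoreg} produces, I would unwind $H^{p+l}(\O_{\P^\r}(\bm{b}-\m))$ via Künneth as a sum over $\sum_k q_k=p+l$ of products $\bigotimes_k H^{q_k}(\P^{r_k},\O_{\P^{r_k}}(b_k-m_k))$: such a term is nonzero exactly when, in each coordinate $k$, either $b_k\ge m_k$ with $q_k=0$, or $b_k\le m_k-r_k-1$ with $q_k=r_k$. If $S$ is the set of coordinates of the second kind then $\sum_{k\in S}r_k=p+l$, and the constraints $m_k\ge b_k+r_k+1$ for $k\in S$, $0\le m_k\le b_k$ for $k\notin S$, and $|\m|=a+l$ combine into one numerical inequality. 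Taking $\bm{b}=(a,\dots,a)-\bm{\lambda}$ for an admissible shift $\bm{\lambda}$ of Definition \ref{multigradedregdef}, and using only $m_k\le a+l$ and $a\ge 2$, this inequality fails for every $S$, so no Künneth term survives: this already yields $(a,\dots,a)+\N^n\subseteq\reg(\I_C)$, the direct analogue of \cite{gruson1983theorem}*{Theorem 1.1}.

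It remains to push the $k$-th coordinate down from $a$ to $a_k=\min\{r_ka-d_k,a\}$. The arithmetic of $a$ (using $d_j\ge r_j$ for all $j$, hence $a\ge d_k-r_k+2$) shows that this is a strict improvement exactly when the $k$-th factor is $\P^1$, where $a_k=a-d_k$; and the coarse shape of $(*)$ cannot see it, since a summand $\O_{\P^\r}(-\m)$ with $m_k$ as large as $a+i$ genuinely occurs even then. So the saving of $d_k$ must be imported from the geometry of $C$, and here I would use $0\to\I_C\to\O_{\P^\r}\to\O_C\to 0$: for $\bm{b}$ in the relevant range the ambient term $H^{\ge 1}(\O_{\P^\r}(\bm{b}-\bm{\lambda}))$ is computed directly by Künneth and causes no trouble, so $H^{\ge 1}(\I_C(\bm{b}-\bm{\lambda}))=0$ becomes equivalent to projective normality of $C$ in degree $\bm{b}-\bm{\lambda}$ together with $H^1(\O_C(\bm{b}-\bm{\lambda}))=0$---statements about $\O_C$. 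Since $C$ maps to the $k$-th factor $\P^1$ with degree $d_k$, an argument in the style of \cite{gruson1983theorem}---restricting to fibers of that projection, or bounding the splitting type of the rank-$d_k$ pushforward of $\O_C$ to $\P^1$---controls both thresholds in the $k$-th coordinate by $d_k$ rather than $a$; rerunning the Künneth count of the previous paragraph with the $k$-th coordinate tracked separately and this input inserted then produces the corner $(a_1,\dots,a_n)$.

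I expect this $\P^1$-factor refinement to be the main obstacle: it is the one place where the formal shape of $(*)$ is not enough and must be combined with the actual degree of $C$, all while staying compatible with the admissible shifts in the definition of multigraded regularity. It is also the source of the hypothesis $\P^\r\ne\P^1\times\P^1$---the refinement for a $\P^1$-factor $k$ leans on the complementary factors $\prod_{j\ne k}\P^{r_j}$, and it degenerates when that complement is itself a single $\P^1$, i.e.\ exactly when $\P^\r=\P^1\times\P^1$, mirroring the reason $n=1$ is excluded from Theorem \ref{maintheorem} (cf.\ Remark \ref{Why statement doesnt reduce to n=1}). In that one case the bound is nonetheless true and can be checked by hand, $\I_C$ being a line bundle there.
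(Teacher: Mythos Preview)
Your first two paragraphs---getting the nearly-exact Eagon--Northcott complex from Proposition \ref{maintheoremsingular}, feeding it into Theorem \ref{lineartoreg}, and landing on $(a,\dots,a)+\N^n\subseteq\reg(\I_C)$---are essentially the paper's argument. (One wrinkle: the complex in Proposition \ref{maintheoremsingular} resolves $\Fitt(p_*\A)$, not $\I_C$, and its homology is supported on all of $C$, not just the singular locus; the paper bridges this with the short exact sequence $0\to\Fitt(p_*\A)\to\I_C\to\I_C/\Fitt(p_*\A)\to 0$, whose cokernel \emph{is} supported on points. Your hypercohomology argument can be made to work too, but you should be aware of what is exact where.)

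The genuine gap is in the refinement step. Your assertion that ``the coarse shape of $(*)$ cannot see it, since a summand $\O_{\P^\r}(-\m)$ with $m_k$ as large as $a+i$ genuinely occurs'' is exactly what the paper shows is \emph{false}. The first Eagon--Northcott term is $\E_0=\bigoplus_{|\m|=a}\O_{\P^\r}(-\m)^{M(\m)}$ with
\[
M(\m)=\prod_{j=1}^n\binom{h^0_\A(\bm{e}_j)}{m_j},
\]
so any summand with $m_k>h^0_\A(\bm{e}_k)$ has $M(\m)=0$ and does not appear at all. Lemma \ref{h^0(e)} computes $h^0_\A(\bm{e}_k)=r_ka-d_k$, and hence the componentwise maximum of the twists that actually occur in $\E_0$ is already $\min\{r_ka-d_k,a\}$ in the $k$-th slot. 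Thus $\reg(\E_0)=(a_1,\dots,a_n)+\N^n$ straight from Theorem \ref{lineartoreg}, with no extra geometric input about fibers or pushforwards. Your proposed detour through $0\to\I_C\to\O_{\P^\r}\to\O_C\to 0$ and splitting types on $\P^1$ is neither needed nor carried out.

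Two further corrections. Your diagnosis of the $\P^1\times\P^1$ exclusion is not the paper's: the actual reason is that one must check $a\le\sum_j h^0_\A(\bm{e}_j)$ to guarantee the claimed maximum is attained, and the rank equation $a=\sum_j h^0_\A(\bm{e}_j)+1-\rank(\ker u)$ forces this \emph{unless} $\ker u=0$, which by codimension means $\dim\P^\r=2$, i.e.\ $\P^\r=\P^1\times\P^1$. And your claim that ``in that one case the bound is nonetheless true and can be checked by hand'' is false---Example \ref{LozovanuExample} exhibits a smooth curve in $\P^1\times\P^1$ for which the bound of Proposition \ref{regularityCor} fails.
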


For example, Proposition \ref{regularityCor} says that a curve $C$ in $\P^{\bm{1}}$ of degree $(d,\dots, d)$ is $(d,\dots, d)$-regular for $n>2$, and that the curve in the previous example is $(4,4)$-regular.\footnote{One can compute using \textit{Macaulay2} that the actual regularity region is $(2,2) + \N^2$.} Due to standard properties of multigraded regularity, there are various corollaries one can say about $C$. For instance, as long as $\bm{d}\in \reg(\I_C)$ then the complete linear series associated to $\O_C(\bm{d})$ gives a projectively normal embedding of $C$, provided that each coordinate of $\bm{d}$ is positive. Perhaps surprising is that the dominating term $a$ in Proposition \ref{regularityCor} only depends on the sum of data coming from the behavior of the curve in only two (rather than all) of the products in $\P^\r$ as suggested in \cite{lozovanu2012vanishing}.

At least in the smooth and $n=2$ case, analogues like Proposition \ref{regularityCor} have been studied by Lozovanu \cite{Lozovanu2008RegularityOS} using generic projection techniques developed by Gruson and Peskine \cites{GP1,GP2}. These techniques avoid a generalization such as Theorem \ref{maintheorem} and use a stronger notion of nondegenerate\footnote{The main bounds in \cite{Lozovanu2008RegularityOS} are for curves embedded in $\P^{d_1} \times \P^{d_2}$ where $d_1,d_2 > 1$ with birational projections onto each factor.}, but gives sharper bounds -- further comparisons are made in Examples \ref{standardexample} and \ref{LozovanuExample}. 

This paper is organized as follows: In \S \ref{Background and Notation} we provide notation and necessary background. In \S \ref{Multigraded Regularity section}, we prove new results linking the shape of resolutions to multigraded regularity and introduce some of the tools we will need for Theorem \ref{maintheorem}. In \S \ref{Main Result and Proof}, Theorem \ref{maintheorem} is proven, and then in \S \ref{Regularity of Curves} it is used to prove Proposition \ref{regularityCor}. Lastly, in \S \ref{Examples} we explore more examples of the utility of our result.

\subsection*{Acknowledgements} Thanks to Daniel Erman for his valuable insight during this project. Thanks also to Michael Kemeny who helped me understand some of the arguments in \cite{gruson1983theorem}, to Rob Lazarsfeld for alerting me to a different treatment of the $n=2$ case by his student in \cite{Lozovanu2008RegularityOS}, and to an anonymous reviewer for their thorough reading and detailed suggestions that greatly improved the paper. The computer algebra system \textit{Macaulay2} \cite{M2} was used extensively, in particular, the \texttt{VirtualResolutions} package \cite{Almousa_2020}.

\section{Background and Notation}\label{Background and Notation}
\renewcommand*{\thetheorem}{\thesection.\arabic{theorem}}
\begin{enumerate}[label = (\thesection.\theenumi)]
    \item We work throughout over an algebraically closed field $\k$ of arbitrary characteristic.
    \item We let $\bm{e}_1, \dots, \bm{e}_n$ be the standard basis of $\Pic(\P^\r) \cong \Z^n$ and order the elements component-wise. To extend the notation in the introduction, given a sheaf $\F$ on $\P^\r$ defined by
    \[ \F = \F_1 \boxtimes \cdots \boxtimes \F_n = \pi_1^* \F_1 \otimes \cdots \otimes \pi_n^* \F_n \]
    where $\pi_i: \P^\r \to \P^{r_i}$ is the $i$th projection map, we define the twist of $\F$ by $\m \in \Pic(\P^r)$ as $\F(\m) \coloneqq \F_1(m_1) \boxtimes \cdots \boxtimes \F(m_n)$ and the the exterior power of $\F$ by $\m$ as $\wedge^\m \F \coloneqq \wedge^{m_1} \F_1 \boxtimes \cdots \boxtimes \wedge^{m_n} \F_n$.
    \item \label{def:nondegeneratecurve} A \textit{curve} is a reduced and irreducible, but possibly singular, projective variety of dimension one. We will call the curve \textit{nondegenerate} if the inclusion $C\subseteq \P^\r$ is nondegenerate in each factor. Note that this is equivalent to asking that no multigraded form of magnitude 1 appears in the defining ideal.
    \item \label{kernelbundle} An embedding $p\colon C \inj \P^\r$ is determined componentwise via the linear systems of $n$ line bundles $\L_i$ on $C$. That is, $p_i : C \to \P^{r_i}$ is associated to an linear subspace $V_i \subseteq H^0(C,\L_i)$ of dimension $r_i+1$ (since $p$ is nondegenerate). This gives rise to the exact sequence 
    \begin{equation*}
        0 \to \M_{V_i} \to V_i \otimes \O_C \stackrel{\text{ev}}{\to} \L_i \to 0.
    \end{equation*}
    $\M_{V_i}$ is the \textit{kernel bundle} associated to $V_i$ of rank $r_i$. More concretely, we can think of the short exact sequence above as being the pullback via $p_i$ of the Euler sequence 
    \begin{equation*}
        0 \to \Omega_{\P^{r_i}}(1) \to (\O_{\P^{\bm{r}}})^{\oplus r_i+1} \to \O_{\P^{r_i}}(1) \to 0.
    \end{equation*}
    One can obtain a natural multigraded analogue to the kernel bundle by taking the direct sum of the complexes above for each factor and pulling back by $p$. We will be primarily concerned with the cohomology of the subbundle $\M \coloneqq p^* \Omega_{\P^\r}(\bm{1})$, or more accurately, of 
    \begin{align*}
         \wedge^\m \M &= \wedge^\m (p^* \Omega_{\P^\r}(\bm{1})) \\
         &= \wedge^\m p^* \Omega_{\P^\r}(\m)
    \end{align*}
    which we denote by $p^*\Omega^\m_{\P^\r}(\m)$.
    \item \label{EagonNorthcott} Given a homomorphism $\varphi\colon \E \to \F$ between two vector bundles of ranks $e\geq f$ on a smooth variety, we can define the Eagon-Northcott complex
    \begin{multline*}
        0 \to \bigwedge^e \E \otimes (\Sym_{e-f}\F)^* \to \bigwedge^{e-1} \E \otimes (\Sym_{e-f+1}\F)^* \to \cdots\\
        \cdots \to \bigwedge^{f+1} \E \otimes (\Sym_1\F)^* \to \bigwedge^f \E\otimes (\Sym_0\F)^* \stackrel{\wedge^f \varphi}{\longrightarrow} \bigwedge^f \F \to 0  
    \end{multline*}
    The image of $\wedge^f \varphi$ is $\Fitt(\coker \varphi)$, the Fitting ideal sheaf of $\coker \varphi$ corresponding to the ideal of maximal minors of $\varphi$. One basic fact we need to use about this complex is as follows:
    
    \begin{fact}[c.f. Appendix B.2  \cite{lazarsfeld2017positivity}]\label{ENfact}
        The Eagon-Northcott complex is always exact away from $\Supp(\coker \varphi)$. If $C$ is smooth the Eagon-Northcott is exact.
    \end{fact}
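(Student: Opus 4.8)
The plan is to derive both assertions from the classical acyclicity theory of Eagon--Northcott complexes, i.e.\ from the Buchsbaum--Eisenbud criterion, after one preliminary identification. Write $\varphi\colon\E\to\F$ with $e=\rank\E\geq f=\rank\F$ and set $I\coloneqq\Fitt(\coker\varphi)=I_f(\varphi)$, the ideal sheaf of $f\times f$ minors of $\varphi$. The first step is the elementary observation that $\Supp(\coker\varphi)=V(I)$: by Nakayama, $\coker\varphi$ vanishes at a point $x$ precisely when $\varphi\otimes\kappa(x)$ is surjective, i.e.\ precisely when some maximal minor of $\varphi$ is a unit at $x$. Thus $X\setminus\Supp(\coker\varphi)$ is exactly the locus where $I$ is the unit ideal.

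For the first assertion I would argue locally at a point $x\notin\Supp(\coker\varphi)$. There some maximal minor of $\varphi$ is invertible, so after invertible row and column operations $\varphi$ becomes the standard projection $R^{e-f}\oplus R^{f}\surj R^{f}$, a split surjection with free kernel. Since $I=(1)$ in this local ring, the depth hypotheses of the acyclicity criterion below are vacuous, so $H_i=0$ for $i\geq 1$; and $H_0=\coker(\wedge^{f}\varphi)=\bigwedge^{f}\F\otimes\coker\varphi=0$ as well. (Equivalently, one checks directly that the Eagon--Northcott complex of a split surjection is null-homotopic.) Hence the Eagon--Northcott complex is exact on $X\setminus\Supp(\coker\varphi)$.

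For the second assertion, place ourselves in the setting of Theorem~\ref{maintheorem}, where $\coker\varphi=\O_C$ for a smooth curve $C$ in the smooth ambient variety $X$ on which $\varphi$ is defined, and where the ranks have been arranged so that the length $e-f+1$ of the Eagon--Northcott complex equals $\mathrm{codim}_X C$. Then $V(I)=\Supp(\coker\varphi)=C$ has codimension $e-f+1$, and since $X$ is smooth, hence Cohen--Macaulay, $\depth(I)=\mathrm{codim}_X C=e-f+1$, the maximum allowed by the classical bound $\depth(I_f(\varphi))\leq e-f+1$ on the grade of a determinantal ideal. The Eagon--Northcott complex satisfies the formal rank conditions of the Buchsbaum--Eisenbud criterion, and every ideal of minors occurring along its differentials has radical $\sqrt{I}$, so acyclicity reduces to the single inequality $\depth(I)\geq e-f+1$, which we have just verified. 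Therefore the complex is a resolution of $\coker(\wedge^{f}\varphi)=\bigwedge^{f}\F\otimes\O_C$, i.e.\ it is exact.

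The main obstacle is really bookkeeping rather than a new idea: confirming that the Eagon--Northcott complex meets the rank conditions and that all ideals of minors along its differentials share the radical of $I_f(\varphi)$, so that the Buchsbaum--Eisenbud criterion collapses to the one depth statement. This is exactly the content of the classical Eagon--Northcott theorem and may legitimately be cited; the only input specific to the present paper is the codimension count $\mathrm{codim}_X C=e-f+1$, which is not implied by smoothness of $C$ by itself but follows from the explicit form of the map $\varphi$ built in the proof of Theorem~\ref{maintheorem}, where smoothness is used precisely to ensure that $\coker\varphi$ is the honest structure sheaf $\O_C$ rather than a sheaf with larger support or embedded points.
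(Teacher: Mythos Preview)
The paper does not prove this fact; it is recorded only as a citation to Lazarsfeld's \emph{Positivity}, Appendix~B.2, so there is no in-paper argument to compare against. Your treatment of the first assertion is standard and correct.

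Your argument for the second assertion, however, rests on the claim that in the paper's application one has $\mathrm{codim}_X C = e - f + 1$, and this is false. From the linear presentation~(\ref{linear presentation}) the source has rank $e = \sum_k h^0_\A(\bm{e}_k)$ and the target has rank $f = h^0(\widetilde C,\A) = a$; Lemma~\ref{h^0(e)} then gives $e - f = (|\r|-1)a - |\bm d|$. In the introductory example ($\P^2\times\P^2$, degree $(3,3)$, $a=4$) this yields $e - f + 1 = 7$, while $\mathrm{codim}_{\P^\r} C = 3$. Hence $\depth(I_f(\varphi)) = \depth(\I_C) = 3 < 7$, and the Buchsbaum--Eisenbud criterion you invoke simply does not apply. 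Smoothness of $C$ ensures that $\Fitt(p_*\A) = \I_C$ is the reduced ideal of a regular curve, but it does nothing to force the determinantal locus to have the expected codimension $e-f+1$ that the classical acyclicity theorem requires.

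In effect you have located an imprecision in the paper itself: the clause ``If $C$ is smooth the Eagon--Northcott is exact'' does not follow from the cited result in Lazarsfeld absent the expected-codimension hypothesis, and that hypothesis fails in the paper's own setting. The first assertion (exactness off $C$, a set of dimension~$1$) is all that Proposition~\ref{regularityCor} actually needs, but Theorem~\ref{maintheorem} as stated asserts a genuine resolution, and neither the paper's citation nor your argument establishes that.
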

\end{enumerate}

\section{Multigraded Regularity}\label{Multigraded Regularity section}
A coherent sheaf $\F$ on $\P^r$, is $m$-regular if 
\[ H^i(\P^r, \F \otimes \O_{\P^r}(m-i)) = 0, \hspace{2em} \forall i > 0. \] The Castelnuovo-Mumford regularity $\reg(\F)$ of $\F$ is then defined as the least integer $m$ for which $\F$ is $m$-regular. Castelnuovo-Mumford regularity is a fundamental invariant within algebraic geometry that can be thought to approximate the smallest twist for which the sheaf is generated by its global sections. It has been used to bound the degree of syzygies \cites{gruson1983theorem,Ein1993SyzygiesAK, Kwak1998CastelnuovoRF}, construct Hilbert schemes \cites{Gotzmann1978EineBF,FGAExplained}, and understand the algorithmic complexity of computing Gr\"{o}bner bases for syzygies \cite{Bayer1988OnTC}.

Due mostly to work in toric geometry, interest in understanding a multigraded analogue to regularity is burgeoning, e.g. \cites{H2002TheRO, Maclagan2003UniformBO, Hering2006SyzygiesMR,Lozovanu2008RegularityOS, Eisenbud2014TateRF, BotbolChardin17,ChardinNemati20, BerkeschKleinLoperYang21,  SayrafiBruceMultigradedRegularity,Bruce2022BoundsOM}. Maclagan and Smith \cite{MaclaganSmith04} introduced a standard notion of multigraded regularity:
\begin{defn}\label{multigradedregdef}
    A coherent sheaf $\F$ on $\P^\r=\P^{r_1} \times \cdots \times \P^{r_n}$ is $\m$-\textit{regular} if 
    \begin{equation*}
        H^{|\bm{i}|}(\P^\r, \F \otimes \O_{\P^\r}(\m-\bm{i})) =0,
    \end{equation*}
    for all $\bm{i}\in \N^n$ such that $|\bm{i}| > 0$. The (multigraded) \textit{regularity} $\reg(\F)$ of $\F$ is the region 
    \[ \reg(\F) = \Set{ \m \in \Z^n \given \F \text{ is } \m\text{-regular}}. \]
\end{defn}
For example, the sheaf $\O_{\P^\r}(-a,-b)$ is $(a,b)$-regular and the sheaf $\O_{\P^\r}(-1,2) \oplus \O_{\P^r}(1,-3)$ is $(1,3)$-regular. In the standard graded case, Definition \ref{multigradedregdef} reduces to the Castelnuovo-Mumford regularity. Further, many of the standard facts and geometric interpretations about Castelnuovo-Mumford regularity generalize to multigraded regularity. For instance, letting $X$ be a toric variety then:
\begin{enumerate}
    \item[(i)] If $\F$ is $\m$-regular then $\m+\N^n\subseteq \reg \F$.
    \item[(ii)] If $\I_X$ is the ideal sheaf of $X$ and $\m\in \reg(\I_X)$ then $X$ is cut out scheme theoretically by equations of degree $\m$.
    \item[(iii)] If $\O_X(\m)$ is an ample line bundle and $\m\in \reg(\O_X)$ then the complete linear series associated to $\O_X(\m)$ gives a projectively normal embedding of $X$.
\end{enumerate}
Unfortunately, many of the standard tools we have for studying Castelnuovo-Mumford regularity fail when passing to multigraded regularity. For instance, it is possible to find two resolutions with the same Betti numbers but different multigraded regularities (see Example 5.1 in \cite{SayrafiBruceMultigradedRegularity}). 
There have nevertheless been several efforts to prove analogues of the equivalence between multigraded regularity and Betti numbers. Maclagan and Smith proved the first such result in \cite{MaclaganSmith04}*{Theorem 1.7(1)}, which was sharpened slightly by the work of Botbol and Chardin \cite{BotbolChardin17}. Berkesch, Erman, and Smith then recovered a variant of the equivalence if one worked with \textit{virtual resolutions} \cite{berkesch2017virtual}*{Theorem 2.9}. This variant was made effective in \cite{SayrafiBruceMultigradedRegularity}*{Theorem A}. Virtual resolutions are actively studied: see \cites{Yang19,Loper2019WhatMA,GaoLiLoperMattoo21, BerkeschKleinLoperYang21, BoomsPeot2022}, among many others.

In our case, we need results estimating regularity in terms of the shape of the resolution but in cases where the module is generated in multiple distinct multidegrees. For these purposes, Maclagan and Smith's result is the most flexible. The following theorem accomplishes a generalization of this by allowing complexes that are exact off of a set of dimension $\leq 1$, combining the virtual resolution from \cite{MaclaganSmith04}*{Theorem 1.7(1)} with the idea that "near exactness" is enough for bounding regularity from \cite{gruson1983theorem}.

\begin{proposition}[Compare with Theorem 7.2 in \cite{MaclaganSmith04}]\label{MSGentheorem}
    Suppose the complex 
    \begin{equation*}
        \cdots \to \E_3 \stackrel{d_3}{\to} \E_2 \stackrel{d_2}{\to} \E_1 \stackrel{d_1}{\to} \E_0 \stackrel{d_0}{\to} \F \to 0 
    \end{equation*}
    of coherent sheaves on $\P^\r$ is exact away from a set of dimension $\leq 1$. Then we have 
    \begin{equation*}
        \bigcup_{\phi\colon [|\r|+1] \to [n]} \left(  \bigcap_{0\leq i \leq |\r|+1} \left(-\bm{e}_{\phi(1)} - \cdots -\bm{e}_{\phi(i)} + \reg(\E_i) \right) \right) \subseteq \reg(\F)
    \end{equation*}
    where the union is over all functions $\phi\colon[|\r|+1] \to [n]$ and $[k] \coloneqq \{1, \dots, k\}$.
\end{proposition}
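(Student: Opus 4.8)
The plan is to fix a multidegree $\m$ in the left-hand side and push the required vanishing down the complex one spot at a time. Membership of $\m$ in the union provides a function $\phi\colon[|\r|+1]\to[n]$ with $\m+\bm v_i\in\reg(\E_i)$ for all $0\le i\le|\r|+1$, where $\bm v_i\coloneqq\bm e_{\phi(1)}+\cdots+\bm e_{\phi(i)}\in\N^n$ (so $\bm v_0=\bm{0}$ and $|\bm v_i|=i$). Fixing also an arbitrary $\bm u\in\N^n$ with $q\coloneqq|\bm u|>0$, the goal reduces to the single statement $H^q(\P^\r,\F\otimes\O_{\P^\r}(\m-\bm u))=0$ (and if $q>|\r|=\dim\P^\r$ there is nothing to prove, so we may assume $q\le|\r|$).

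First I would split the complex into short exact sequences. Setting $B_i\coloneqq\Im(d_{i+1})$ and $Z_i\coloneqq\ker(d_i)$ for $i\ge0$, with homology sheaves $\mathcal H_i\coloneqq Z_i/B_i$, the near-exactness hypothesis says each $\mathcal H_i$ is supported in dimension $\le1$, hence $H^j(\mathcal H_i\otimes\O_{\P^\r}(\bm w))=0$ for all $j\ge2$ and all $\bm w$; here $\F=\Im(d_0)=:B_{-1}$ since $d_0$ is surjective. The relevant sequences are $0\to Z_i\to\E_i\to B_{i-1}\to0$ and $0\to B_i\to Z_i\to\mathcal H_i\to0$.

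Next comes the descent. Twisting everything by $\O_{\P^\r}(\m-\bm u)$: the sequence $0\to Z_0\to\E_0\to\F\to0$ shows $H^q(\F\otimes\O(\m-\bm u))$ vanishes once $H^q(\E_0\otimes\O(\m-\bm u))$ and $H^{q+1}(Z_0\otimes\O(\m-\bm u))$ do; the sequence $0\to B_0\to Z_0\to\mathcal H_0\to0$ reduces the latter to $H^{q+1}(B_0\otimes\O(\m-\bm u))$, since $q+1\ge2$ kills the $\mathcal H_0$ term; the sequence $0\to Z_1\to\E_1\to B_0\to0$ reduces this to $H^{q+1}(\E_1\otimes\O(\m-\bm u))$ and $H^{q+2}(Z_1\otimes\O(\m-\bm u))$; and so on. Iterating, $H^q(\F\otimes\O(\m-\bm u))=0$ provided $H^{q+i}(\E_i\otimes\O_{\P^\r}(\m-\bm u))=0$ for every $i\ge0$ — finitely many conditions, because $H^{q+i}$ of any coherent sheaf on $\P^\r$ vanishes once $q+i>|\r|$, so only $0\le i\le|\r|-q$ can matter. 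To finish, for each such $i$ the vector $\bm v_i+\bm u$ lies in $\N^n$ with magnitude $q+i>0$, so $(\m+\bm v_i)$-regularity of $\E_i$ applied to it gives precisely $H^{q+i}\bigl(\E_i\otimes\O_{\P^\r}((\m+\bm v_i)-(\bm v_i+\bm u))\bigr)=H^{q+i}(\E_i\otimes\O_{\P^\r}(\m-\bm u))=0$. Hence $H^q(\F\otimes\O_{\P^\r}(\m-\bm u))=0$, and as $\bm u$ was an arbitrary element of $\N^n$ of positive magnitude, $\m\in\reg(\F)$.

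I expect the only real subtlety to be the bookkeeping in the descent: at the $i$-th stage the cohomological degree has climbed to $q+i$, while the twist-shift $\bm v_i$ we are allowed to spend has magnitude only $i$, and these must advance in lockstep. This is exactly why the set in the statement is indexed by a single function $\phi$ — a single lattice path of unit coordinate steps — rather than by shifts chosen independently at each homological degree, and it is where the argument differs most from an ungraded regularity chase. The only ingredient beyond \cite{MaclaganSmith04}*{Theorem 7.2} is the presence of possibly nonzero homology; for that one just needs that a coherent sheaf supported in dimension $\le1$ has vanishing cohomology in degrees $\ge2$, together with the fact that $\mathcal H_i$ always enters the descent in cohomological degree $q+i+1\ge2$.
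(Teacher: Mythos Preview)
Your argument is correct and is essentially the same cohomology chase as in the paper: both split the complex into short exact sequences, use that the homology sheaves have vanishing $H^{\ge 2}$ because of their $\le 1$-dimensional support, and feed in the $(\m+\bm v_i)$-regularity of $\E_i$ to kill $H^{q+i}$ at each stage. The only cosmetic difference is packaging---the paper phrases the descent as an induction on $\reg(\Im d_k)$ via the quotient $\E_k/\Im d_{k+1}$ and an auxiliary lemma on regularity in short exact sequences, whereas you fix $\m$ and $\bm u$ up front and chase $Z_i$ and $B_i$ directly.
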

\begin{example}
    Suppose that $\P^\r = \P^1\times \P^1$, so that $|\r| + 1 = 3$ and $n=2$. There are $8$ functions $\phi: [3] \to [2]$. For the function $\{1,2,3\} \stackrel{\phi}{\mapsto} \{1,1,1\}$, the corresponding term in the union is
    \[ \reg(\E_0) \cap ( (-1,0) + \reg(\E_1)) \cap ((-2,0) + \reg(\E_2)) \cap ((-3,0) + \reg(\E_3)). \]
\end{example}
\begin{proof}
    Fix a function $\phi \colon[|\r|+1] \to [n]$. We claim that for all $k\geq 0$,
    \begin{equation*}
        \bigcap_{k\leq i\leq |\r|+1} \left(-\bm{e}_{\phi(k+1)} - \cdots - \bm{e}_{\phi(i)} + \reg(\E_i)\right) \subseteq \reg(\Im d_k).
    \end{equation*}
    Since $\Im d_0 = \F$, this will prove the theorem. When $k>|\r|+1$, the intersection is empty so the claim holds trivially -- we will establish the claim by using descending induction on $k$. Suppose that $k\leq |\r|+1$ is an index so that the claim holds for $k+1$ and consider the exact sequence 
    \begin{equation*}
        0 \to \mathcal{H}_k(\E_\bullet) \to \E_k/\Im d_{k+1} \to \Im d_k \to 0,
    \end{equation*}
    where $\mathcal{H}_k(\E_\bullet)$ is the $k^{\text{th}}$ homology sheaf of $\E_\bullet$. Since $\E_\bullet$ is exact away from a set of dimension $\leq 1$, the homology sheaves are supported on a set of dimension $\leq 1$, therefore $H^i(\P^{\bm{r}},\mathcal{H}_k(\E_\bullet))$ vanishes for $i>1$. After twisting by $\m-\bm{i}$, the associated long exact sequence in sheaf cohomology implies that $H^{|\bm{i}|}(\P^\r, \E_k/\Im d_{k+1}(\m-\bm{i}))=H^{|\bm{i}|}(\P^\r, \Im d_k(\m-\bm{i}))$ for $|\bm{i}|>1$ and that $H^1(\P^\r, \E_k/\Im d_{k+1}(\m-\bm{e}_j))$ surjects onto $H^1(\P^r, \Im d_k(\m-\bm{e}_j))$. This directly implies 
    \begin{equation*}
        \reg(\E_k/\Im d_{k+1}) \subseteq \reg(\Im d_k).
    \end{equation*}
    By Lemma \ref{multireginSES} (proven next), the short exact sequence $0 \to \Im d_{k+1} \to \E_k \to \E_k/\Im d_{k+1} \to 0$ gives
    \begin{equation*}
        \left(\bigcup_{1\leq j \leq |\r|+1} \left(-\bm{e}_j + \reg(\Im d_{k+1})\right)\right) \cap \reg(\E_k) \subseteq \reg(\E_k/\Im d_{k+1})
    \end{equation*}
    However, by using our induction hypothesis we find that the left side contains
    \begin{equation*}
        \bigcup_{1\leq j \leq |\r|+1} -\bm{e}_j + \left(\bigcap_{k\leq i\leq |\r|+1} \left(-\bm{e}_{\phi(k+2)} - \cdots - \bm{e}_{\phi(i)} + \reg(\E_i) \right)\right),
    \end{equation*}
    which further contains the $\phi(k+1)^{\text{th}}$ term of the union:
    \begin{equation*}
        -\bm{e}_{\phi(k+1)} + \bigcap_{k\leq i\leq |\r|+1} \left(-\bm{e}_{\phi(k+2)} - \cdots - \bm{e}_{\phi(i)} + \reg(\E_i)\right).
    \end{equation*}
    Due to the fact that our regularity regions are cones, this contains \begin{equation*}
         \bigcap_{k\leq i\leq |\r|+1} \left(-\bm{e}_{\phi(k+1)} -\bm{e}_{\phi(k+2)} - \cdots - \bm{e}_{\phi(i)} + \reg(\E_i)\right).
    \end{equation*} 
    Stringing together the last five inclusions gives the claim.
\end{proof}
\begin{lemma}[Compare with Lemma 7.1(2) in \cite{MaclaganSmith04}]\label{multireginSES}
    Suppose that $0\to \F' \to \F \to \F'' \to 0$ is a short exact sequence of coherent sheaves on $\P^r$. Then 
    \begin{equation*}
        \left(\bigcup_{1\leq j \leq |\r|+1} \left(-\bm{e}_j + \reg(\F')\right)\right) \cap \reg(\F) \subseteq \reg(\F'')
    \end{equation*}
\end{lemma}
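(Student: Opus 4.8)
The plan is to prove the containment one index at a time. Since intersection distributes over union, it is enough to fix a single $j$ and show that $\bigl(-\bm{e}_j + \reg(\F')\bigr)\cap\reg(\F)\subseteq\reg(\F'')$; the only property of $\bm{e}_j$ that will be used is that it is a standard basis vector, so $\bm{e}_j\in\N^n$ and $|\bm{e}_j|=1$ (in particular only the indices $j\in\{1,\dots,n\}$ contribute anything). So I would fix such a $j$ and fix $\m$ in the left-hand side, which records exactly two hypotheses: $\m\in\reg(\F)$, and $\m+\bm{e}_j\in\reg(\F')$. The goal is then to verify the defining vanishing for $\F''$, namely $H^{|\bm{i}|}(\P^\r,\F''(\m-\bm{i}))=0$ for every $\bm{i}\in\N^n$ with $|\bm{i}|>0$.

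The tool is the long exact sequence in cohomology. Tensoring the given short exact sequence with the locally free sheaf $\O_{\P^\r}(\m-\bm{i})$ preserves exactness, and taking cohomology of $0\to\F'(\m-\bm{i})\to\F(\m-\bm{i})\to\F''(\m-\bm{i})\to 0$ yields the three-term piece
\[
H^{|\bm{i}|}(\P^\r,\F(\m-\bm{i}))\to H^{|\bm{i}|}(\P^\r,\F''(\m-\bm{i}))\to H^{|\bm{i}|+1}(\P^\r,\F'(\m-\bm{i})).
\]
The first group vanishes since $|\bm{i}|>0$ and $\m\in\reg(\F)$, so it suffices to show the third group vanishes.

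The one observation that makes this work is a rewriting of the twist: set $\bm{k}\coloneqq\bm{i}+\bm{e}_j\in\N^n$, so that $\m-\bm{i}=(\m+\bm{e}_j)-\bm{k}$ and $|\bm{k}|=|\bm{i}|+1$. Then $H^{|\bm{i}|+1}(\P^\r,\F'(\m-\bm{i}))=H^{|\bm{k}|}(\P^\r,\F'((\m+\bm{e}_j)-\bm{k}))$, and since $\m+\bm{e}_j\in\reg(\F')$ and $|\bm{k}|=|\bm{i}|+1>0$, this group is zero by the definition of multigraded regularity. Hence $H^{|\bm{i}|}(\P^\r,\F''(\m-\bm{i}))=0$, and as $\bm{i}$ was arbitrary this gives $\m\in\reg(\F'')$.

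I do not expect a genuine obstacle here: the argument is entirely a matching of cohomological degrees against magnitudes of multidegree shifts, and the only subtle point is precisely the index bookkeeping $\m-\bm{i}=(\m+\bm{e}_j)-(\bm{i}+\bm{e}_j)$ that converts an $(|\bm{i}|+1)$-st cohomology group of $\F'$ into one lying in the regularity range of $\m+\bm{e}_j$. (If one wishes to be pedantic about the stated range $1\le j\le|\r|+1$ of the union, the same computation shows the union may as well be taken only over $j\in\{1,\dots,n\}$, since those are the only magnitude-one shifts that occur.)
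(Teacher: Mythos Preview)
Your proof is correct and is essentially identical to the paper's own argument: fix $j$, assume $\m\in\reg(\F)$ and $\m+\bm{e}_j\in\reg(\F')$, twist the short exact sequence by $\m-\bm{i}$, and use the rewriting $\m-\bm{i}=(\m+\bm{e}_j)-(\bm{i}+\bm{e}_j)$ to kill the $H^{|\bm{i}|+1}$ term for $\F'$. Your parenthetical observation about the index range $1\le j\le |\r|+1$ versus $1\le j\le n$ is also well taken.
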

\begin{proof}
    Suppose that $\F$ is $\m$-regular and that $\F'$ is $(\m+\bm{e}_j)$-regular for some $\bm{e}_j$. After twisting by $\m-\bm{i}$ for some $1\leq |\bm{i}| \leq |\bm{r}|$, the associated long exact sequence contains 
    \begin{equation*}
        \cdots \to H^{|i|}(\P^\r, \F(\m - \bm{i})) \to H^{|i|}(\P^\r, \F''(\m - \bm{i})) \to H^{|i|+1}(\P^\r, \F'(\m - \bm{i})) \to \cdots .
    \end{equation*}
    Since $\F$ is $\m$-regular, the first term is zero. Since $\F'$ is $(\m+\bm{e}_j)$-regular, we also find that the third term vanishes:
    \begin{equation*}
        H^{|\bm{i}|+1}(\P^\r, \F'(\m-\bm{i})) = H^{|\bm{i}|+1}(\P^\r, \F'(\m+\bm{e}_j-(\bm{i}+\bm{e}_j))) = 0.
    \end{equation*}
    Thus the middle term vanishes and $\F''$ is $\m$-regular as desired.
\end{proof}

For a sharper bound assuming that $\E$ is a direct sum of line bundles, see  \cite{Bruce2022BoundsOM}*{Lemma 4.6}. One possible problem with Proposition \ref{MSGentheorem} is that the region can be hard to compute -- the number of intersections and unions grows as $\text{O}(|\r|n^{|r|+1})$. However, if the ``growth'' of the regularity regions is controlled we can simplify dramatically.

\begin{defn}
    Let 
    \begin{equation*}
        \cdots \to \E_3 \to \E_2 \to \E_1 \to \E_0 \to \F \to 0 
    \end{equation*}
    be a complex on $\P^\r$ with $\E_\bullet$ a direct sum of line bundles $\O_{\P^\r}(-\m)$. We say that $\E_\bullet$ has \textit{linear twist growth} if for all twists $\O_{\P^\r}(-\m_k)$ appearing among $\E_k$ there exists a twist $\O_{\P^\r}(-\m_0)$ of $\E_0$ such that $\m_k - \m_0 \in \N^n$ and  $0 \leq |\m_k - \m_0| \leq k.$
\end{defn}

\begin{figure}
    \newcommand{\makegrid}{
    \foreach \x in {-2,...,6}
    \foreach \y in {-2,...,6}
        { \fill[gray,fill=gray] (\x,\y) circle (1.5pt); }
    \draw[-,  semithick] (-2,0)--(6,0);
    \draw[-,  semithick] (0,-2)--(0,6);
    }
    \begin{tikzpicture}[scale=.5]
    \makegrid
  
    \fill[Black] (0,2) circle (6pt);
    \fill[Black] (3,1) circle (6pt);
    \filldraw[color=OrangeRed, fill=White] (0,5) circle (6pt);
    \filldraw[color=OrangeRed, fill=White] (1,4) circle (6pt);
    \filldraw[color=OrangeRed, fill=White] (2,3) circle (6pt);
    \filldraw[color=OrangeRed, fill=White] (3,2) circle (6pt);
    \filldraw[color=OrangeRed, fill=White] (0,4) circle (6pt);
    \filldraw[color=OrangeRed, fill=White] (0,3) circle (6pt);
    \filldraw[color=OrangeRed, fill=White] (1,3) circle (6pt);
    \filldraw[color=OrangeRed, fill=White] (1,2) circle (6pt);
    \filldraw[color=OrangeRed, fill=White] (2,2) circle (6pt);
    \filldraw[color=OrangeRed, fill=White] (3,4) circle (6pt);
    \filldraw[color=OrangeRed, fill=White] (3,3) circle (6pt);
    \filldraw[color=OrangeRed, fill=White] (4,1) circle (6pt);
    \filldraw[color=OrangeRed, fill=White] (4,2) circle (6pt);
    \filldraw[color=OrangeRed, fill=White] (4,3) circle (6pt);
    \filldraw[color=OrangeRed, fill=White] (5,1) circle (6pt);
    \filldraw[color=OrangeRed, fill=White] (5,2) circle (6pt);
    \filldraw[color=OrangeRed, fill=White] (6,1) circle (6pt);
    \end{tikzpicture}\quad
    \caption{Suppose that the shifts in $\E_0$ are $(0,2)$ and $(3,1)$, e.g. $\E_0 = \O_{\P^\r}(0,2) \oplus \O_{\P^\r}(3,1)$. The possible shifts for $\E_3$ are displayed in red unfilled circles.}
\end{figure}
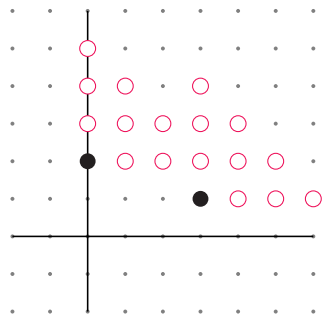

Equivalently, for any twist $\O_{\P^\r}(-\m_k)$ of $\E_k$ there exists $
\O_{\P^\r}(-\m_0)$ such that $\m_k = \m_0 + \bm{k}$ where $|\bm{k}|\leq k$. This definition generalizes the algebraic criterion for regularity, see \cite{lazarsfeld2017positivity}*{Theorem 1.8.26}. For example, Eagon-Northcott complexes have linear twist growth, and in particular, the resolution appearing in Theorem \ref{maintheorem} has linear twist growth. The following result, Theorem \ref{lineartoreg}, says that if a complex as in Proposition \ref{MSGentheorem} has linear twist growth then one can bound the regularity using only the first term. Applying this idea to Theorem \ref{maintheorem} is essentially the content of the proof of Proposition \ref{regularityCor}.

\begin{theorem}\label{lineartoreg}
    Let $\F$ be a coherent sheaf on $\P^\r$. Suppose the complex 
    \begin{equation*}
        \cdots \to \E_3 \to \E_2 \to \E_1 \to \E_0 \to \F \to 0 
    \end{equation*}
    is exact off of a set of dimension $\leq 1$ and $\E_i = \bigoplus_j \O_{\P^\r}(-\m_{i,j})$. If $\E_\bullet$ has linear twist growth then $\reg \E_0 \subseteq \reg \F$.
\end{theorem}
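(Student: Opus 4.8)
The plan is to prove the conclusion directly from the K\"unneth description of the cohomology of line bundles on $\P^\r$, rather than feeding $\E_\bullet$ into Proposition \ref{MSGentheorem}. Routing through that proposition only exploits the (far too lossy) sufficient condition ``$\m$ dominates every twist of $\E_i$ $\Rightarrow$ $\E_i$ is $\m$-regular,'' and in general this does not land back on $\reg\E_0$; the idea here is instead to use the precise cohomology of each summand $\O_{\P^\r}(-\m_{i,j})$, which under linear twist growth is controlled by the corresponding twist of $\E_0$. (The flavor is that of \cite{Bruce2022BoundsOM}*{Lemma 4.6}.) So fix $\m\in\reg\E_0$ and a multidegree $\bm{i}\in\N^n$ with $|\bm{i}|>0$; the goal is $H^{|\bm{i}|}(\P^\r,\F(\m-\bm{i}))=0$.

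\emph{Step 1 (a term-by-term vanishing).} I will show that for every $p\ge 0$,
\[
    H^{|\bm{i}|+p}\bigl(\P^\r,\ \E_p(\m-\bm{i})\bigr)=0 .
\]
Since $\E_p=\bigoplus_j\O_{\P^\r}(-\m_{p,j})$ and cohomology commutes with finite direct sums, it suffices to kill one twist. Linear twist growth lets me write $\m_{p,j}=\m_{0,j'}+\bm{k}$ with $\bm{k}\in\N^n$ and $|\bm{k}|\le p$; setting $\bm{b}:=\bm{i}+\bm{k}\in\N^n$ (so $0<|\bm{b}|\le|\bm{i}|+p$) the summand in question is $\O_{\P^\r}\bigl((\m-\m_{0,j'})-\bm{b}\bigr)$. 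So Step 1 reduces to: if $\O_{\P^\r}(-\bm{a})$ is $\m$-regular, $\bm{b}\in\N^n$ with $|\bm{b}|>0$, and $k\ge|\bm{b}|$, then $H^k\bigl(\P^\r,\O_{\P^\r}(\m-\bm{a}-\bm{b})\bigr)=0$ — applied with $\bm{a}=\m_{0,j'}$, $k=|\bm{i}|+p$, using $\reg\E_0=\bigcap_j\reg\O_{\P^\r}(-\m_{0,j})$ to get $\m\in\reg\O_{\P^\r}(-\m_{0,j'})$. To prove the claim, suppose the cohomology is nonzero and put $\bm{v}:=\m-\bm{a}-\bm{b}$; by K\"unneth and Bott vanishing on each $\P^{r_t}$, every $v_t$ is either $\ge 0$ or $\le -r_t-1$, and the set $S=\{t:v_t\le -r_t-1\}$ satisfies $\sum_{t\in S}r_t=k$. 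Because $k\ge|\bm{b}|\ge\sum_{t\in S}b_t$, one can enlarge $\bm{b}$ to some $\bm{c}\in\N^n$ with $c_t=b_t$ off $S$, $c_t\ge b_t$ on $S$, and $|\bm{c}|=k$; then $\m-\bm{a}-\bm{c}$ still has negative set exactly $S$, so $H^k\bigl(\O_{\P^\r}(\m-\bm{a}-\bm{c})\bigr)\ne 0$ by the same computation, contradicting the $\m$-regularity of $\O_{\P^\r}(-\bm{a})$ since $|\bm{c}|=k>0$.

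\emph{Step 2 (glue to $\F$ using near-exactness).} This is the cohomology-chase already present in the proof of Proposition \ref{MSGentheorem}. Set $\F_0:=\F$ (the map $\E_0\to\F$ being surjective, as for Proposition \ref{MSGentheorem}) and $\F_p:=\Im(d_p\colon\E_p\to\E_{p-1})$ for $p\ge 1$. From the exact sequences $0\to\ker d_p\to\E_p\to\F_p\to 0$ and $0\to\F_{p+1}\to\ker d_p\to\mathcal{H}_p(\E_\bullet)\to 0$, together with the fact that $\mathcal{H}_p(\E_\bullet)$ is supported in dimension $\le 1$ (so $H^{\ge 2}$ of it vanishes after any twist), one gets for every $q\ge 1$:
\[
    H^{q}\bigl(\E_p(\m-\bm{i})\bigr)=0 \ \text{ and }\ H^{q+1}\bigl(\F_{p+1}(\m-\bm{i})\bigr)=0 \ \Longrightarrow\ H^{q}\bigl(\F_p(\m-\bm{i})\bigr)=0 .
\]
Applying this with $q=|\bm{i}|$, $p=0$ reduces $H^{|\bm{i}|}(\F(\m-\bm{i}))=0$ to $H^{|\bm{i}|+1}(\F_1(\m-\bm{i}))=0$, which reduces to $H^{|\bm{i}|+2}(\F_2(\m-\bm{i}))=0$, and so on; the vanishings $H^{|\bm{i}|+p}(\E_p(\m-\bm{i}))=0$ needed at each stage are exactly what Step 1 provides, and the chain terminates as soon as $|\bm{i}|+p+1>|\r|$ by Grothendieck vanishing. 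Hence $\m\in\reg\F$.

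The main obstacle is the claim inside Step 1: one has to stop thinking in terms of ``$\m$ dominates all twists'' and instead track the exact K\"unneth cohomology, choosing the auxiliary multidegree $\bm{c}$ so that the obstruction reappears as a genuine failure of $\m$-regularity of the \emph{single} bundle $\O_{\P^\r}(-\m_{0,j'})$. Everything after that is routine — and in particular the ``dimension $\le 1$'' hypothesis is visibly exactly what makes the implication in Step 2 valid in every cohomological degree $q\ge 1$, i.e.\ for every $\bm{i}$ that matters for regularity.
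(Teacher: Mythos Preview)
Your proof is correct and follows the same two-step architecture as the paper's own argument: first establish $H^{|\bm{i}|+p}(\E_p(\m-\bm{i}))=0$ for all $p\ge 0$ using linear twist growth, then chase cohomology through the near-exact complex (the paper packages Step~2 as a citation to \cite{lazarsfeld2017positivity}*{Proposition B.1.2}). Note that the paper does \emph{not} route through Proposition~\ref{MSGentheorem} either---it argues directly, invoking (\ref{lineartoregcondition}) and the inequality $|\bm{i}+\bm{k}_j|\le|\bm{i}|+k$ for Step~1; your K\"unneth argument simply makes explicit the point the paper leaves terse there.
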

\begin{proof}
    Suppose $\E_0$ is $\m$-regular, we wish to show that $\F$ is too. By definition this means 
    \begin{align}\label{lineartoregcondition}
        H^{|\bm{i}|}(\P^\r, \E_0(\m-\bm{i})) &= H^{|\bm{i}|}(\P^\r, \bigoplus_j \O_{\P^\r}(-\m_{0,j}+\m-\bm{i}))\\
        &= \bigoplus_j H^{|\bm{i}|}(\P^\r, \O_{\P^\r}(-\m_{0,j}+\m-\bm{i}))\notag \\
        &= 0 \notag
    \end{align}
    for all $\bm{i} \in \N^n$ such that $|\bm{i}|>0$. Fix some $\bm{i}$ such that $1\leq |\bm{i}| \leq |\r|$ and twist our complex by $\m-\bm{i}$:
    \begin{equation}\label{lineartoregcomplex}
        \cdots \to \E_3(\m-\bm{i}) \stackrel{d_3}{\to} \E_2(\m-\bm{i}) \stackrel{d_2}{\to} \E_1(\m-\bm{i}) \stackrel{d_1}{\to} \E_0(\m-\bm{i}) \stackrel{d_0}{\to} \F(\m-\bm{i}) \to 0 
    \end{equation}
    Since $\E_\bullet$ has linear twist growth, we can inductively replace twists appearing in $\E_k(\m-\bm{i})$ as shown:
    \begin{align*}
        H^{|\bm{i}|+k}(\P^\r,\E_k(\m-\bm{i})) &= \bigoplus_j H^{|\bm{i}|+k}\left(\P^\r,\O_{\P^\r}(-\m_{k,j}+\m-\bm{i})\right)\\
        &= \bigoplus_j H^{|\bm{i}|+k}(\P^\r,\O_{\P^\r}(-\m_{0,j} + \m - (\bm{i} + \bm{k}_j))
    \end{align*}
    where $|\bm{k}_j|\leq k$. However, (\ref{lineartoregcondition}) now implies that $H^{|\bm{i}|+k}(\P^\r,\E_k(\m-\bm{i}))$ vanishes since $|\bm{i}+\bm{k}_j| \leq |\bm{i}|+k$. Therefore we are in the situation 
    \begin{equation*}
        H^{|\bm{i}|}(\P^\r,\E_0(\m-\bm{i})) = H^{|\bm{i}|+1}(\P^\r, \E_1(\m-\bm{i})) = \cdots = H^{|\bm{i}|+|\r|}(\P^\r, \E_{|\r|}(\m-\bm{i})) = 0
    \end{equation*}
    Furthermore, since complex (\ref{lineartoregcomplex}) is exact away from a set of dimension $\leq 1$, the homology sheaves are supported on a set of dimension $\leq 1$, therefore $H^i(\P^{\bm{r}},\mathcal{H}_k(\E_\bullet))$ vanishes for $i>1$. Therefore Proposition B.1.2 in \cite{lazarsfeld2017positivity} applies to give us the vanishing 
    \begin{equation*}
        H^{|\bm{i}|}(\P^\r, \F(\m-\bm{i})) =0.
    \end{equation*}
    for all $\bm{i} \in \N^n$ such that $|\bm{i}|>0$, as desired.
\end{proof}

When $r=1$, Theorem \ref{lineartoreg} specializes to the existence of linear resolutions on projective spaces; see Proposition 1.8.8 in \cite{lazarsfeld2017positivity}. \cites{MaclaganSmith04,berkesch2017virtual, SayrafiBruceMultigradedRegularity} all have results connecting multigraded regularity to the existence of certain invariants of linear resolutions, but those results are most applicable to the case where the first term is generated in a single multidegree. Some of the ways the notions from these papers compare are further explained in Remark \ref{Comparison of linearity}.

Notice that since $\E_0$ is a direct sum of line bundles, Theorem \ref{lineartoreg} will only ever bound $\reg(\F)$ by a region of the form $\m+\N^n$. Despite this apparent loss, it is worth pointing out that in the case of linear twist growth, Proposition \ref{MSGentheorem} reduces exactly to Theorem \ref{lineartoreg}. To end off this section, the following example illustrates how you might use Theorem \ref{lineartoreg} to estimate the regularity region of some subvariety of $\P^\r$.

\psuedo{
    Certainly, it is the case that the region is contained in $\reg(\E_0)$. All that is left to show is that $\reg(\E_0)$ is contained in the union of intersections of regions $\reg_{\E_i}$ for $i\geq 1$ in the proposition. We can do this because the linear twist growth condition implies (but is not the same as) that $\reg(\E_i) \subseteq \bigcap -\bm{e}_i +\reg{\E_{i+1}}$. 
}

\begin{example}\label{twopointsinP1xP1}
    Let $X$ be two generic points in $\P^1\times \P^1$. Then one resolution of $\I_X$ is given by
    \begin{equation*}
        0 \to
        \begin{matrix}
            \O_{\P^1 \times \P^1}(-2,-1)^2
        \end{matrix} 
        \to 
        \begin{matrix}
            \O_{\P^1 \times \P^1}(-2,0)\\ \oplus \\ 
            \O_{\P^1 \times \P^1}(-1,-1)^2
        \end{matrix} 
        \to \I_X \to 0
    \end{equation*}
    Since this resolution satisfies linear twist growth, Theorem \ref{lineartoreg} applies to find that $(2,1) + \N^2\subseteq \reg(\I_X)$. Using \textit{Macaulay2}, we can compute the actual regularity region to be 
    $\reg(\I_X) = ((1,0) + \N^2) \cup ((0,1) + \N^2)$. 
\end{example}

\begin{remark}
    The resolution in Example \ref{twopointsinP1xP1} comes from sheafifying a virtual resolution of the ideal $I_X$. Specifically, the resolution corresponds to the virtual resolution of a pair $(I_X,(1,0))$ (see \cite{berkesch2017virtual}*{Theorem 3.1}). We could also get the virtual resolution of the pair $(I_X, (0,1))$ to get the resolution 
    \begin{equation*}
        0 \to
        \begin{matrix}
            \O_{\P^1 \times \P^1}(-1,-2)^2
        \end{matrix} 
        \to 
        \begin{matrix}
            \O_{\P^1 \times \P^1}(-1,-1)^2\\ \oplus \\ 
            \O_{\P^1 \times \P^1}(0,-2)
        \end{matrix} 
        \to \I_X \to 0
    \end{equation*}
    and another bound $(1,2)+\N^2 \subseteq \reg(\I_X)$.
\end{remark}

\begin{figure}
    \newcommand{\makegrid}{
    \foreach \x in {-2,...,6}
    \foreach \y in {-2,...,6}
        { \fill[gray,fill=gray] (\x,\y) circle (1.5pt); }
    \draw[-,  semithick] (-2,0)--(6,0);
    \draw[-,  semithick] (0,-2)--(0,6);
    }
    \begin{tikzpicture}[scale=0.5]
        \makegrid
        
        \path[fill=OrangeRed!45, opacity=0.5] (2,1)--(2,6)--(6,6)--(6,1)--(2,1);
        \draw[->, ultra thick,OrangeRed] (2,1)--(2,6);
        \draw[->, ultra thick,OrangeRed] (2,1)--(6,1);
        \fill[OrangeRed,fill=Black] (2,1) circle (6pt);

        \path[fill=PineGreen!45, opacity=0.5] (0,1)--(0,6)--(6,6)--(6,0)--(1,0)--(1,1)--(0,1);
        \draw[->, ultra thick,PineGreen] (0,1)--(0,6);
        \draw[-, ultra thick,PineGreen] (0,1)--(1,1);
        \draw[-, ultra thick,PineGreen] (1,1)--(1,0);
        \draw[->, ultra thick,PineGreen] (1,0)--(6,0);
        \fill[PineGreen,fill=Black] (0,1) circle (6pt);
        \fill[PineGreen,fill=Black] (1,0) circle (6pt);
    \end{tikzpicture}
    \caption{The two regions inside $\Pic(\P^1\times \P^1) \cong \Z^2$ from Example \ref{twopointsinP1xP1}; $(2,1) + \N^2$ in red and $\reg(\I_X)$ in green.}
\end{figure}

\begin{remark}\label{Comparison of linearity}
    Theorem \ref{lineartoreg} compares closely to both Theorem 2.9 in \cite{berkesch2017virtual} and Theorem A in \cite{SayrafiBruceMultigradedRegularity}. Linear twist growth closely compares with \textit{linearity} in \cite{SayrafiBruceMultigradedRegularity} --  Complexes that satisfy linearity from \cite{SayrafiBruceMultigradedRegularity} will have linear twist growth after sheafifying. However, the same may not be true for the notion of quasilinearity from \cite{SayrafiBruceMultigradedRegularity}, which is the same as having degrees inside of the regions $\Delta_i + \N^r$ in \cite{berkesch2017virtual}. So while we shouldn't expect a strict converse to Theorem \ref{lineartoreg} to hold, relaxing our definition of linear twist growth would allow one. Proposition 1.7 in \cite{Eisenbud2014TateRF} is closest to being a converse in current literature.
\end{remark}

\begin{remark}
    The proof of Theorem \ref{lineartoreg} uses the decomposition of $\E$ as a sum of line bundles in an essential way since we can then make use of the fact that $\O_{\P^\r}$ is $\bm{0}$-regular. It would be interesting to know if Theorem \ref{lineartoreg} still holds if we allow $\E_i$ to be coherent sheaves.
\end{remark}

\section{Main Result and Proof}\label{Main Result and Proof}

Throughout this section let $C\subseteq \P^\r$ be a reduced curve with smooth normalization $\widetilde{C}$ and natural nondegenerate map $p\colon \widetilde{C} \to \P^\r$. Let $\A$ be a line bundle on $\widetilde{C}$ and define
\begin{equation*}
    h_{\A}^i(\bm{m}) \coloneqq \dim H^i(\widetilde{C},\wedge^{\m} \M \otimes \A) = \dim H^i(\widetilde{C},p^*\Omega^\m_{\P^\r}(\m) \otimes \A). 
\end{equation*}
We now prove the key towards establishing Theorem \ref{maintheorem} for possibly singular curves. Proposition \ref{maintheoremsingular} says that a complex that resolves $\Fitt(p_*\A)$ off a set of dimension $\leq 1$ is controlled by the positivity of $\wedge^\m \M$. If $C$ happens to be smooth then $\Fitt(p_*\A) \cong \I_C$ and this complex becomes a resolution. 

\begin{proposition}\label{maintheoremsingular}
    Suppose that $h_{\A}^1(\m) = 0$ for all $|\m| = 2$. Then we have an Eagon-Northcott complex 
    \begin{equation*}
    \cdots \to \E_2 \to  \E_1 \to \E_0 \to \Fitt(p_*\A) \to 0
    \end{equation*}
    where $\E_i$ are direct sums of line bundles $\O_{\P^\r}(-\m_i)$ with $\m_i \in \N^n$ and $|\m_i| = h^0(\widetilde{C},\A)+i$.
\end{proposition}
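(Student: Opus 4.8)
The plan is to produce a map $\varphi$ of vector bundles on $\P^\r$ that are direct sums of line bundles, whose cokernel is $p_*\A$ — the hypothesis will be used precisely to identify this cokernel — and then to take $(\E_\bullet)$ to be the Eagon--Northcott complex of $\varphi$; the map $\varphi$ is obtained by pushing a restricted Be\u{i}linson resolution of the graph of $p$ forward to $\P^\r$, along the lines of \cite{gruson1983theorem}*{Proposition 1.2}. First I would assemble the resolution on $\widetilde{C}\times\P^\r$. Take Be\u{i}linson's resolution of the diagonal on each factor $\P^{r_i}\times\P^{r_i}$, in the form with degree-$k$ term $\Omega^{k}_{\P^{r_i}}(k)\boxtimes\O_{\P^{r_i}}(-k)$, and form the external tensor product over $i=1,\dots,n$: identifying $\prod_i(\P^{r_i}\times\P^{r_i})$ with $\P^\r\times\P^\r$, this is a locally free resolution $B^\bullet\to\O_\Delta$ whose degree-$k$ term is $\bigoplus_{|\m|=k,\ \m\le\r}\Omega^\m_{\P^\r}(\m)\boxtimes\O_{\P^\r}(-\m)$. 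Pulling $B^\bullet$ back along $g\coloneqq p\times\mathrm{id}\colon\widetilde{C}\times\P^\r\to\P^\r\times\P^\r$ should again be a resolution, of $\O_\Gamma$ where $\Gamma\cong\widetilde{C}$ is the graph of $p$: this is where smoothness of the normalization enters, since $\Gamma=(\widetilde{C}\times\P^\r)\times_{\P^\r\times\P^\r}\Delta$, all four schemes are Cohen--Macaulay, and $\dim\Gamma=1=\dim(\widetilde{C}\times\P^\r)+\dim\Delta-\dim(\P^\r\times\P^\r)$, so the fibre product is Tor-independent and $Lg^*\O_\Delta=\O_\Gamma$. Writing $q_1,q_2$ for the two projections off $\widetilde{C}\times\P^\r$ and tensoring with $q_1^*\A$, one gets a locally free resolution $K^\bullet\to\Gamma_*\A$ with $K^{-k}=\bigoplus_{|\m|=k}q_1^*(\wedge^\m\M\otimes\A)\otimes q_2^*\O_{\P^\r}(-\m)$.

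Next I would push $K^\bullet$ forward along $q_2$. Since $p$ is finite, $Rq_{2*}\Gamma_*\A=p_*\A$ concentrated in degree $0$; and flat base change along $\P^\r\to\mathrm{Spec}\,\k$ together with the projection formula give $R^bq_{2*}K^{-k}=\bigoplus_{|\m|=k}H^b(\widetilde{C},\wedge^\m\M\otimes\A)\otimes_\k\O_{\P^\r}(-\m)=\bigoplus_{|\m|=k}\O_{\P^\r}(-\m)^{\oplus h_\A^b(\m)}$, which vanishes for $b\ge 2$ since $\widetilde{C}$ is a curve. Thus the hypercohomology spectral sequence $E_1^{a,b}=R^bq_{2*}K^{a}$, abutting to $p_*\A$ placed in total degree $0$, has only the two rows $b=0,1$, degenerates at $E_3$, and satisfies
\[
E_1^{0,0}=\O_{\P^\r}^{\oplus h^0(\widetilde{C},\A)},\qquad E_1^{-1,0}=\textstyle\bigoplus_{i}\O_{\P^\r}(-\bm e_i)^{\oplus h_\A^0(\bm e_i)},\qquad E_1^{-2,1}=\textstyle\bigoplus_{|\m|=2}\O_{\P^\r}(-\m)^{\oplus h_\A^1(\m)};
\]
the hypothesis says exactly that $E_1^{-2,1}=0$.

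Set $\varphi\coloneqq d_1\colon E_1^{-1,0}\to E_1^{0,0}$. Because $E_1^{-2,1}=0$ there is no $d_2$ into $E_2^{0,0}$, so $\coker\varphi=E_2^{0,0}=E_\infty^{0,0}$, and likewise $E_\infty^{-1,1}=E_2^{-1,1}=\ker\big(d_1\colon E_1^{-1,1}\to E_1^{0,1}\big)$. The crucial claim is that $E_\infty^{-1,1}=0$. Convergence forces $E_\infty^{0,1}=0$, and since there is no room for a nonzero $d_2$ at $E_2^{0,1}$ (its possible source lies in the vanishing row $b=2$ and its possible target lies beyond $K^\bullet$), we get $E_2^{0,1}=0$; but $E_2^{0,1}=\coker(d_1\colon E_1^{-1,1}\to E_1^{0,1})$, so this $d_1$ is surjective, and therefore its kernel $E_\infty^{-1,1}$ is \emph{locally free} (the kernel of a surjection of vector bundles). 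On the other hand $E_\infty^{-1,1}$ is a quotient of $p_*\A$, which is a \emph{torsion} sheaf because $C$ has positive codimension in $\P^\r$; a locally free torsion sheaf is zero, so $E_\infty^{-1,1}=0$ and hence $p_*\A=E_\infty^{0,0}=\coker\varphi$.

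It remains to invoke the Eagon--Northcott construction of \ref{EagonNorthcott}. Since $\coker\varphi=p_*\A$ is torsion, $\varphi$ is generically surjective and $\rank E_1^{-1,0}\ge\rank E_1^{0,0}=h^0(\widetilde{C},\A)=:f$, so the Eagon--Northcott complex of $\varphi$ is defined, with $i$-th term $\E_i=\wedge^{f+i}E_1^{-1,0}\otimes(\Sym_i E_1^{0,0})^*$. As $E_1^{0,0}\cong\O_{\P^\r}^{\oplus f}$ carries no twist while $E_1^{-1,0}$ is a direct sum of line bundles $\O_{\P^\r}(-\bm e_k)$, each $\E_i$ is a direct sum of line bundles $\O_{\P^\r}(-\m_i)$ with $\m_i\in\N^n$ and $|\m_i|=f+i=h^0(\widetilde{C},\A)+i$; and the complex ends in $\Im(\wedge^f\varphi)=\Fitt(\coker\varphi)=\Fitt(p_*\A)$, being exact off $\Supp(\coker\varphi)$ by Fact \ref{ENfact}. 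I expect the main difficulty to be the vanishing $E_\infty^{-1,1}=0$ in the third paragraph: in general the lower row of this spectral sequence does contribute to $p_*\A$, and killing it needs two features to cooperate — that $\widetilde{C}$ is a curve, so there are exactly two rows and $E_\infty^{-1,1}=\ker d_1$ with $d_1$ forced surjective by convergence, and that the hypothesis annihilates precisely the entry $E_1^{-2,1}$ that would otherwise make $E_\infty^{-1,1}$ a proper subquotient of $\ker d_1$. A more routine supporting point is the Tor-independence used to restrict $B^\bullet$ (the one place smoothness of $\widetilde{C}$ is needed), together with the identification $\wedge^{\bm e_i}\M=\M_{V_i}$ that makes $E_1^{-1,0}$ a sum of the $\O_{\P^\r}(-\bm e_i)$.
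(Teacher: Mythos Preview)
Your proof is correct and follows the same route as the paper: pull the product Be\u{i}linson resolution back to $\widetilde{C}\times\P^\r$, twist by $q_1^*\A$, push forward along $q_2$ using the hypercohomology spectral sequence, and then take the Eagon--Northcott complex of the resulting linear presentation of $p_*\A$. You in fact supply a step the paper leaves implicit --- the vanishing $E_\infty^{-1,1}=0$ via your locally-free-versus-torsion argument --- without which the spectral sequence only gives $\coker\varphi\subseteq p_*\A$ rather than equality.
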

\begin{remark}
    Proposition \ref{maintheoremsingular} specializes to Proposition 1.2 in \cite{gruson1983theorem} in the case $n=1$.
\end{remark}
\begin{proof}
    The first step is to show that there is an exact sequence 
    \begin{equation}\label{linear presentation}
        \bigoplus_{|\m| = 1} \O_{\P^\r}(-\m)^{\oplus h_{\A}^0(\m)}  \stackrel{u}{\to}  H^0(\widetilde{C},\A) \otimes \O_{\P^\r} \to  p_*\A  \to 0
    \end{equation}
    of sheaves on $\P^\r$ -- in effect, we are showing the multilinear analogue of $p_*\A$ having a linear presentation. Once we do this, the Eagon-Northcott complex constructed from $u$ is the desired complex. We will obtain (\ref{linear presentation}) by computing the pushforward of a certain Koszul resolution of $\Gamma_p \subseteq \widetilde{C} \times \P^\r$, the graph of $\widetilde{C}$ along $p$.

    In order to construct this Koszul resolution, consider the following morphisms:
    \begin{equation*}
    \begin{tikzcd}[row sep = tiny]
        \widetilde{C} \ar[r, "\text{id} \times p"] & \widetilde{C} \times \P^\r \ar[r, "p \times \text{id}"] & \P^\r \times \P^\r
    \end{tikzcd}    
    \end{equation*}
    Sitting inside of $\P^\r \times \P^\r$ there is the diagonal $\Delta$, the product of the diagonals $\Delta_i \subseteq \P^{r_i} \times \P^{r_i}$ in each factor. The graph $\Gamma_p$ is defined scheme-theoretically by the pullback of $\Delta$ across the map $p\times \text{id}$. Famously, due to Be\u{i}linson \cite{Beilinson1978CoherentSO}, the structure sheaf $\O_{\Delta_i}$ of each diagonal has the locally free resolution $B_i$:
    \begin{equation*}
        \left[ 0 \to \Omega^i_{\P^{r_i}}(i) \boxtimes \O_{\P^{r_i}}(-i) \to \Omega^{i-1}_{\P^{r_i}}(i-1) \boxtimes \O_{\P^{r_i}}(-i+1) \to \cdots \to \Omega_{\P^{r_i}}(1) \boxtimes \O_{\P^{r_i}}(-1) \to \O_{\P^{r_i}\times \P^{r_i}} \right].
    \end{equation*}
    
    Therefore by Lemma 2.7.3 in \cite{Weibel}, $\O_\Delta$ is resolved by the tensor complex $B = \boxtimes_{i=0}^n B_i$:
    \begin{equation*}
        \left[\bigoplus_{|\m| = \max\{r_i\}} \Omega^\m_{\P^\r}(\m) \boxtimes \O_{\P^\r}(-\m) \to \cdots \to \bigoplus_{|\m| = 1} \Omega^\m_{\P^\r}(\m) \boxtimes \O_{\P^\r}(-\m) \to \O_{\P^\r \times \P^\r} \right]
    \end{equation*}
    To see an explicit argument of this fact, see Lemma 2.1 in \cite{ berkesch2017virtual}. Since pullback is an exact functor on vector bundles, $\mc{K} \coloneqq (p \times \text{id})^*(B)$ is a resolution of $(p \times \text{id})^*(\O_\Delta) = \O_{\Gamma_p}$ in  $\widetilde{C} \times \P^\r$. 

    Consider the two natural projections on $\widetilde{C} \times \P^\r$:
    \begin{equation*}
        \begin{tikzcd}[row sep = tiny]
            & \widetilde{C} \times \P^\r \ar[ld, "\pi"'] \ar[rd, "f"]\\
            \widetilde{C} & & \P^\r
        \end{tikzcd}
    \end{equation*}

    Since $\A$ is a line bundle on $\widetilde{C}$, tensoring the complex $\mc{K}$ by the pullback $\pi^*\A$ is still exact. $\mc{K} \otimes \pi^* \A$ resolves $\O_{\Gamma_p} \otimes \pi^* \A$, therefore the total derived pushforward $\R f_*(\mc{K} \otimes \pi^*\A)$ is quasi-isomorphic to the object $\R f_*(\O_{\Gamma_p} \otimes \pi^* \A)$ considered as a complex concentrated in degree zero. The following argument confirms that $\R f_*(\O_{\Gamma_p} \otimes \pi^* \A)$ is the first object of (\ref{linear presentation}). 
    
    \begin{adjustwidth}{3em}{3em}
        \textbf{Claim.} $\R f_*(\O_{\Gamma_p} \otimes \pi^* \A) \cong p_*\A$
        \begin{proof}
            Since $\O_{\Gamma_p} = \pi^* \O_{\widetilde{C}}$, we have
        \begin{align*}
            \R f_*(\O_{\Gamma_p} \otimes \pi^*\A) &= \R f_*(\pi^* \O_{\widetilde{C}} \otimes \pi^*\A)  \\
            &=   \R f_*(\pi^* (\O_{\widetilde{C}} \otimes \A)) \\
            &=\R f_*(\pi^* \A)
        \end{align*}
        Then by flat base change \cite[\href{https://stacks.math.columbia.edu/tag/02KH}{Tag 02KH}]{stacks-project} across the following diagram
        \begin{equation*}
            \begin{tikzcd}
                \Gamma_p \ar[r, "\pi"] \ar[d,"f"'] & \widetilde{C} \ar[d, "p"]\\
                \P^\r \ar[r, "id"] & \P^\r
            \end{tikzcd}
        \end{equation*}
        we have that $\R f_*(\O_{\Gamma_p} \otimes \pi^* \A) = p_* \A$. Here, $\pi$ and $f$ are the same projection maps defined above but restricted to the graph $\Gamma_p \subseteq \widetilde{C} \times \P^\r$.
        \end{proof}
    \end{adjustwidth}
    To verify (\ref{linear presentation}), it only remains to compute the first two terms of $\R f_*(\mc{K} \otimes \pi^*\A)$, which we will argue remains exact. The derived pushforward of $\R f_*(\mc{K} \otimes \pi^*\A)$ can be computed by a (second-quadrant) Be\u{i}linson-like spectral sequence
    \[ {}_{\uparrow}E^{\ell,q}_1\colon  R^qf_*((\mc{K} \otimes \pi^*\A)_\ell) \implies p_*\A \]
    where $(\mc{K} \otimes \pi^*\A)_\ell$ is the $\ell^\text{th}$ term of the complex $\mc{K} \otimes \pi^*\A$ (\cite{OkonekVectorBundles}*{II.3.1} or \cite{lazarsfeld2017positivity}*{Appendix(B.2)} or \cite{SzpiroSS}*{\S2.1}). 
    By the projection and K\"{u}nneth formulas,
    \begin{align*}
        R^qf_*((\mc{K} \otimes \pi^*\A)_\ell) &= R^qf_*\left(\bigoplus_{|\m| = \ell} (p^*\Omega^\m_{\P^\r}(\m) \otimes \A) \boxtimes \O_{\P^\r}(-\m)\right)\\
         &= \bigoplus_{|\m| = \ell} R^qf_*(p^*\Omega^\m_{\P^\r}(\m) \otimes \A) \otimes \O_{\P^\r}(-\m)\\
         &= \bigoplus_{|\m| = \ell} H^q(\widetilde{C}, \wedge^\m \M \otimes \A) \otimes \O_{\P^\r}(-\m)\\
        &= \bigoplus_{|\m| = \ell} \O_{\P^\r}(-\m)^{\oplus h_{\A}^q(\m)}.
    \end{align*}

    Thus ${}_{\uparrow}E_1$ looks like
    \begin{equation*}\label{eq:derived-E1}
        \begin{tikzcd}[column sep=small, row sep=tiny]
        {} & 0 &  0 &  0 & \cdots \\
        {} & H^1(\widetilde{C},\A) \otimes \O_{\P^\r} &  \bigoplus_{|\m|=1} \O_{\P^r}(-\m)^{\oplus h_{\A}^1(\m)} &  \bigoplus_{|\m|=2} \O_{\P^r}(-\m)^{\oplus h_{\A}^1(\m)} & \cdots \\
        {} & H^0(\widetilde{C},\A) \otimes \O_{\P^\r} &  \bigoplus_{|\m|=1} \O_{\P^r}(-\m)^{\oplus h_{\A}^0(\m)} &  \bigoplus_{|\m|=2} \O_{\P^r}(-\m)^{\oplus h_{\A}^0(\m)} & \cdots \\
        {} & {} & {}
        \arrow[from=3-4, to=3-3]
        \arrow[from=3-3, to=3-2]
        \arrow[from=3-5, to=3-4]
        \arrow[from=2-5, to=2-4]
        \arrow[from=2-4, to=2-3]
        \arrow[from=2-3, to=2-2]
        \arrow[from=1-5, to=1-4]
        \arrow[from=1-4, to=1-3] 
        \arrow[from=1-3, to=1-2]
        \arrow[crossing over, shift left=18, shorten <=-28pt, shorten >=-18pt, dashed, no head, from=3-2, to=1-2]
        \arrow[shift right=5, shorten <=-99pt, shorten >=-20pt, dashed, no head, from=3-2, to=3-5]
        \end{tikzcd}
      \end{equation*}
      The $H^2$ row vanishes since $\widetilde{C}$ is 1-dimensional. In the limit, the only other differential going into $H^0(\widetilde{C},\A) \otimes \O_{\P^\r}$ is the one on the second page ${}_{\uparrow}E_2$:
    \begin{equation*}\label{eq:derived-E1}
        \begin{tikzcd}[column sep=small, row sep=tiny]
        {} & 0 &  0 &  0 & \cdots \\
        {} & H^1(\widetilde{C},\A) \otimes \O_{\P^\r} &  \bigoplus_{|\m|=1} \O_{\P^r}(-\m)^{\oplus h_{\A}^1(\m)} &  \bigoplus_{|\m|=2} \O_{\P^r}(-\m)^{\oplus h_{\A}^1(\m)} & \cdots \\
        {} & H^0(\widetilde{C},\A) \otimes \O_{\P^\r} &  \bigoplus_{|\m|=1} \O_{\P^r}(-\m)^{\oplus h_{\A}^0(\m)} &  \bigoplus_{|\m|=2} \O_{\P^r}(-\m)^{\oplus h_{\A}^0(\m)} & \cdots \\
        {} & {} & {}
        \arrow[from=2-4, to=3-2]
        \arrow[crossing over, shift left=18, shorten <=-28pt, shorten >=-18pt, dashed, no head, from=3-2, to=1-2]
        \arrow[shift right=5, shorten <=-99pt, shorten >=-20pt, dashed, no head, from=3-2, to=3-5]
        \end{tikzcd}
      \end{equation*}
    However, since $h_{\A}^1(\m)$ vanishes for all $|\m|=2$ by assumption, we finally establish the exact sequence (\ref{linear presentation}). The Eagon-Northcott complex \ref{EagonNorthcott} constructed from $u$ takes the form
    \begin{equation}\label{FittingEN}
        \cdots \to \bigoplus_{|\m| = h^0(\widetilde{C},\A)+1} \O_{\P^r}(-\m)^{\oplus M(\m)} \to \bigoplus_{|\m| = h^0(\widetilde{C},\A)} \O_{\P^\r}(-\m)^{\oplus M(\m)} \stackrel{\wedge^{h^0(\widetilde{C},\A)}u}{\longrightarrow} \Fitt(p_*\A) \to 0 
    \end{equation}   
    where $\wedge^{h^0(\widetilde{C},\A)} u$ is surjective. The ranks $M(\m)$ are explicitly computed in the proof of Proposition \ref{regularityCor}.
\end{proof}

%\begin{remark}
%    In \S\ref{Multigraded Regularity section} it was already discussed that the Eagon-Northcott complex (\ref{FittingEN}) has linear twist growth. However, (\ref{FittingEN}) is linear in an even stronger sense; since the Eagon-Northcott complex (\ref{FittingEN}) is computed from the matrix $u$ of linear forms, i.e. multigraded forms of magnitude 1, all differentials past the first map of the Eagon-Northcott are also composed of linear forms (see Appendix A2H in \cite{GeometryOfSyzygies}).
%\end{remark}

As long as $\A$ is chosen to be positive enough, the vanishings $h^1_\A(\m)=0$ where $|\m|=2$ are automatic by Serre vanishing -- we want to be able to choose $\A$ well. Lemma \ref{h^0(A)} establishes an upper bound for the positivity of $\A$.

\begin{lemma}\label{h^0(A)}
    There exists a line bundle $\A$ on $\widetilde{C}$ with \[h^0(\widetilde{C},\A) = \max\Set*{d_i + d_j - r_i - r_j \given i \neq j} + 2\] such that $h^1_\A(\m) = 0$ for all $|\m| = 2$.
\end{lemma}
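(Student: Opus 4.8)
The plan is to reduce the required vanishings to the cohomology of line bundles on $\widetilde{C}$ using the Euler and Koszul sequences, and then to produce $\A$ by a Brill--Noether / general-position argument.

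First I would identify the bundles that appear. There are only finitely many $\m\in\N^n$ with $|\m|=2$, of two types: $\m=2\bm{e}_i$, for which $\wedge^\m\M=p_i^*\Omega^2_{\P^{r_i}}(2)$ (this is zero when $r_i=1$), and $\m=\bm{e}_i+\bm{e}_j$ with $i\neq j$, for which $\wedge^\m\M=\M_{V_i}\otimes\M_{V_j}$. Since the Euler sequence $0\to\M_{V_i}\to V_i\otimes\O_{\widetilde C}\to\L_i\to 0$ exhibits each $\M_{V_i}$ as a subbundle of a trivial bundle, every $\wedge^\m\M$ is a subbundle of a trivial bundle, and dually $\wedge^\m\M^\vee$ is globally generated; concretely $\wedge^\m\M^\vee$ is the pullback along $p$ of a tensor product of exterior powers of the bundles $T_{\P^{r_i}}(-1)$. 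By Serre duality on $\widetilde{C}$, the condition $h^1_{\A}(\m)=0$ is equivalent to $H^0(\widetilde{C},\,\wedge^\m\M^\vee\otimes\omega_{\widetilde C}\otimes\A^{-1})=0$.

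Next I would choose $\A$. Writing $g$ for the genus of $\widetilde C$ and $N=\max\{d_i+d_j-r_i-r_j : i\neq j\}+2$, take $\A=\omega_{\widetilde C}\otimes\B^{-1}$ for a sufficiently general line bundle $\B$ of degree $g-1-N$. A general line bundle of a given degree carries the fewest possible sections, so $h^0(\widetilde C,\B)=0$; hence $h^1(\widetilde C,\B)=-\chi(\B)=N$ by Riemann--Roch, and by Serre duality $h^0(\widetilde C,\A)=h^1(\widetilde C,\B)=N$, which is exactly the prescribed value (and $H^1(\widetilde C,\A)=h^0(\widetilde C,\B)=0$ for free). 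It then remains to check, for such general $\B$, that $H^0(\widetilde C,\wedge^\m\M^\vee\otimes\B)=0$ for every $|\m|=2$. A nonzero section of $\wedge^\m\M^\vee\otimes\B$ is precisely an embedding of $\B^{-1}$ into $\wedge^\m\M^\vee$ as a subsheaf, so this vanishing for general $\B$ amounts to the locus $\{\,\B \mid \B^{-1}\hookrightarrow\wedge^\m\M^\vee\,\}\subseteq\Pic^{\,g-1-N}(\widetilde C)$ being a proper subvariety. The constant $N$ is calibrated exactly so that $\chi(\wedge^\m\M^\vee\otimes\B)\leq 0$ when $\deg\B=g-1-N$: for $\m=\bm e_i+\bm e_j$ this boils down to $d_i/r_i+d_j/r_j\leq d_i+d_j-r_i-r_j+2$, which holds since $d_i\geq r_i\geq 1$ forces $d_i/r_i\leq d_i-r_i+1$ (and symmetrically in $j$); the type $\m=2\bm e_i$ is analogous, with inequality $2d_i/r_i\leq N$. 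Thus the \emph{expected} codimension of the bad locus is positive, and what remains is to show that the specific bundle $\wedge^\m\M^\vee$ is general enough that its Brill--Noether behaviour is the expected one. I would establish this by a general-position argument in the spirit of \cite{gruson1983theorem}, working with the Euler/Koszul sequences, using the vanishing $H^0(\widetilde C,\M_{V_i})=0$ (valid because $V_i$ is an honest linear system, hence injects into $H^0(\widetilde C,\L_i)$), and exploiting that a general hyperplane section of $p_i(\widetilde C)\subseteq\P^{r_i}$ is in general position.

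The hard part is precisely this last step: extracting the \emph{sharp} constant $N=\max\{d_i+d_j-r_i-r_j\}+2$, as opposed to a cruder bound like $\max\{d_i+d_j\}$, requires genuinely controlling the sub-line-bundles of $\wedge^\m\M^\vee$ and the families in $\Pic(\widetilde C)$ they sweep out, and it is here that the general-position input is indispensable and that the worst case turns out to be the pair $i\neq j$ maximizing $d_i+d_j-r_i-r_j$. This is also where the hypothesis $n>1$ enters: for $n=1$ there is no such pair, and the governing quantity is controlled by a single factor, as in \cite{gruson1983theorem}. Finally, two technical points must be carried through the whole argument: the maps $p_i$ need not be linearly normal in their factors (so $V_i\subseteq H^0(\widetilde C,\L_i)$ may be proper), and $\k$ has arbitrary characteristic, which prevents one from invoking semistability of tensor products and forces arguments directly with the Euler/Koszul sequences.
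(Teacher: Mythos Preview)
Your outline sets up the problem correctly and your choice of $\A$ (general of degree $g+N-1$, equivalently $\omega_{\widetilde C}\otimes\B^{-1}$ with $\B$ general of degree $g-1-N$) agrees with the paper's. But the step you label ``the hard part'' and then defer is \emph{the entire content} of the lemma, and nothing you have written before that point contributes to it. Your Euler--characteristic computation $\chi(\wedge^\m\M^\vee\otimes\B)\le 0$ is correct but is only a necessary condition; it plays no role in the actual argument, and framing the remaining step as a Brill--Noether statement about a ``bad locus'' in $\Pic$ does not bring you closer to a proof. A vector bundle with $\chi\le 0$ can perfectly well have sections after twisting by every line bundle of the given degree, so you still need structural information about $\wedge^\m\M$ beyond its rank and degree.

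The paper supplies exactly that structure, and it is short. Work directly with $H^1$ (no Serre duality): each $\M_{V_i}$ admits a filtration whose successive quotients $Q_i^1,\dots,Q_i^{r_i}$ are line bundles of strictly negative degree (this is the general--position input you allude to, coming from a general flag of hyperplanes as in \cite[Lemma~1.7]{gruson1983theorem} or \cite[Prop.~5.13]{GeometryOfSyzygies}). Since $\sum_k\deg Q_i^k=-d_i$ and each summand is $\le -1$, one gets $\deg Q_i^k\ge r_i-d_i-1$. For $\m=\bm e_i+\bm e_j$ with $i\neq j$, tensoring the two filtrations reduces $H^1(\M_{V_i}\otimes\M_{V_j}\otimes\A)=0$ to $H^1(Q_i^a\otimes Q_j^b\otimes\A)=0$ for all $a,b$; each $Q_i^a\otimes Q_j^b$ has degree $\ge r_i+r_j-d_i-d_j-2$, so for general $\A$ of degree $g+d_i+d_j-r_i-r_j+1$ the tensor has degree $\ge g-1$ and is non--special. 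The case $\m=2\bm e_i$ is covered by the original GLP lemma and requires only the smaller degree $g+d_i-r_i+1$; since $d_j\ge r_j$ for every $j$, the cross term always dominates, which is why $N=\max_{i\neq j}\{d_i+d_j-r_i-r_j\}+2$. Riemann--Roch then gives $h^0(\widetilde C,\A)=N$. This is what you should write in place of the last two paragraphs.
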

\begin{proof}
    As stated, as long as $h^0(\widetilde{C},\A)$ is high enough, we get the result via Serre's vanishing. We will check that $h^0(\widetilde{C},\A) = \max\Set*{d_i + d_j - r_i - r_j \given i \neq j} + 2$ suffices for all choices $\m = \bm{e}_i + \bm{e}_j$ by splitting the analysis into two cases: either $i=j$ or $i\neq j$. The first case is handled by applying the original lemma to each factor, giving $h^0(\widetilde{C},\A)= \max\Set*{d_i - r_i}+2$ \cite[Lemma 1.7]{gruson1983theorem}. We will now show that the second case requires $h^0(\widetilde{C},\A) = \max\Set*{d_i + d_j - r_i - r_j \given i \neq j} + 2$. In situations where we have more than one factor, this case always dominates; since the mapping of $\widetilde{C}$ is nondegenerate in each factor, we have $d_i \geq r_i$ (Proposition 5.3, \cite{GeometryOfSyzygies}).
    
    Without loss of generality, suppose $i=1$ and $j=2$ maximize $\Set*{d_i + d_j - r_i - r_j \given i \neq j}$. Let $\M_1$ and $\M_2$ denote the first and second factor, respectively, of the tensor product $\wedge^\m \M = \wedge^{\bm{e}_1+\bm{e}_2} \M = p^*\O_{\P^{r_1}}(1) \otimes p^*\O_{\P^{r_2}}(1)$. As in the original proof, $\M_1$ and $\M_2$ have filtrations by vector bundles
    \begin{equation*}
        \M_i = F_i^1 \supseteq F^2_i \supseteq \cdots \supseteq F_i^{r_i} \supseteq F_i^{r_i+1} = 0
    \end{equation*}
    whose successive quotients $Q_i^j \coloneqq F_i^j/F_i^{j+1}$ are line bundles of strictly negative degree (see Lemma 1.7 in \cite{gruson1983theorem}, or Proposition 5.13 in \cite{GeometryOfSyzygies}). To complete our claim 
    \begin{equation*}
        h^1_\A(\bm{e}_1 + \bm{e}_2)=\dim H^1(\widetilde{C}, \M_1 \otimes \M_2 \otimes \A) = 0,
    \end{equation*}
    we see by splitting up the filtration $F^i_1$ of $\M_1$ into short exact sequences that it suffices to show the vanishing $H^1(\widetilde{C},Q^i_1 \otimes \M_2 \otimes \A ) = 0$ for all $1\leq i \leq r_i$. Similarly, by using the filtration $F^j_2$ of $\M_2$ we see it further suffices to show 
    \[ H^1(\widetilde{C}, Q_1^i \otimes Q_2^j \otimes \A) = 0 \hspace{0.2in} \text{ for all } 1\leq i \leq r_1 \text{ and } 1 \leq j \leq r_2 \tag{$*$}.\]

    \noindent Since 
    \[ \deg(\M_1) = \sum^{r_1}_{k=1} \deg Q^k_1 = -d_1\] 
    and each line bundle $Q_1^k$ has degree $< 0$, we find that for any fixed $i$ we have the bound
    \begin{equation*}
        \deg Q_1^i = -d_1 - \sum_{k\neq i}^{r_1} \deg Q_1^k \geq -d_1 - (1-r_1) = r_1 - d_1 - 1.
    \end{equation*}
    Of course, the same argument works to get a bound on $\deg Q_2^j$, and putting these results together yields  
    \[ \deg Q^i_1 \otimes Q^j_2 \geq r_1+r_2- d_1 - d_2 -2  \hspace{0.2in} \text{ for all } 1\leq i \leq r_1 \text{ and } 1 \leq j \leq r_2 \]
    Since a generic line bundle of degree $\geq g-1$ is non-special (as a consequence of geometric Riemann-Roch in \S I.2 in \cite{GeometryofAlgebraicCurves} or of Theorem 8.5(1) in \cite{GeometryOfSyzygies}), $(*)$ will be satisfied if $\A$ is a sufficiently general line bundle of degree $g + d_1 + d_2  - r_1 - r_2 + 1$.
    
    Additionally, with this choice of $\A$ one can verify that $\deg \A \geq g - 1$. That is, $\A$ is also non-special and $h^1(\widetilde{C},\A) = 0$. Finally, we can calculate $h^0(\widetilde{C},\A)$ using Riemann-Roch:
    \begin{align*}
        h^0(\widetilde{C},\A) &= h^1(\widetilde{C},\A) + \deg \A + 1 - g \\
        &= d_1 + d_2 - r_1 - r_2 + 2
        \qedhere
    \end{align*}
\end{proof}

\begin{remark}\label{Why statement doesnt reduce to n=1}
    Lemma \ref{h^0(A)} is the primary reason that Proposition \ref{regularityCor} does not reduce to Theorem 1.1 in \cite{gruson1983theorem}. In Theorem 1.1, only a single vanishing $H^1(\widetilde{C}, \wedge^2 \M \otimes \A)=0$ was needed. In contrast, for Lemma \ref{h^0(A)} we must choose $\A$ positive enough to ensure $H^1(\widetilde{C}, \wedge^\m \M \otimes \A)=0$ for all choices of $\m = \bm{e}_i + \bm{e}_j$ satisfying $|\m|=2$. In particular, this necessitates new casework handling $i\neq j$. 
\end{remark}

Proposition \ref{maintheoremsingular} and Lemma \ref{h^0(A)} together give a statement stronger than Theorem \ref{maintheorem}. We finish this section by making the reduction to the smooth case explicit.

\begin{proof}[Proof of Theorem \ref{maintheorem}]
    The sheaf $p_*\A$ is locally isomorphic to $\O_C$ except at the points where $p$ fails to be an isomorphism. In particular, this implies that the support of $p_*\A$ is contained in $C$. But this occurs only at the finitely many singular points of $C$, so $p$ is a closed embedding and around any point we can pick a $U$ such that 
    \begin{align*}
        \res[U]{\Fitt(p_*\A)} &= \Fitt(\res[U]{p_*\A})\\
        &= \Fitt(\O_U)\\
        &= \I_U
    \end{align*}
    Therefore $\Fitt(p_*\A)=\I_C$. Since $C$ is smooth we know $C\cong \widetilde{C}$ and by Fact \ref{ENfact} the complex is actually exact.
\end{proof}

\section{Multiregularity of Curves in Products of Projective Spaces}\label{Regularity of Curves}
An important classical problem is to bounds on the degrees of the defining questions of subvarieties of projective spaces, or more generally to find bounds on their Castelnuovo-Mumford regularity \cites{gruson1983theorem,Ein1993SyzygiesAK, Kwak1998CastelnuovoRF,Kwak2000GenericPT,McCullough2017CounterexamplesTT}. Of particular importance is the aforementioned theorem of Gruson, Lazarsfeld, and Peskine \cite{gruson1983theorem} showing that when $C\subseteq \P^r$ is an integral curve of degree $d$ then $\reg(C) = \reg(\I_C) \leq d - r + 2$. Lozovanu later handled the case for smooth curves in products of two projective spaces (provided neither are $\P^1$) with birational projections \cite{Lozovanu2008RegularityOS}. In keeping with the growing interest in finding bounds on multigraded regularity regions \cites{Bruce2022BoundsOM, Maclagan2003UniformBO}, in this section we use Proposition \ref{maintheoremsingular} to prove Proposition \ref{regularityCor}, the closest analogue for curves in arbitrary products of projective spaces. To simplify the flow of the proof of Proposition \ref{regularityCor}, we set aside a necessary computation in Lemma \ref{h^0(e)}.

\begin{lemma} \label{h^0(e)}
    Under the choice of $\A$ in Lemma \ref{h^0(A)} we have
    \begin{equation*}
        h^0_\A(\bm{e}_k) = -d_k + r_kh^0(\widetilde{C},\A).
    \end{equation*}
\end{lemma}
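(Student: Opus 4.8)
The plan is to compute $h^0_\A(\bm{e}_k) = \dim H^0(\widetilde{C}, \wedge^{\bm{e}_k}\M \otimes \A)$ directly. Here $\wedge^{\bm{e}_k}\M = p^*\Omega_{\P^{r_k}}(1)$, the $k$th kernel bundle $\M_{V_k}$ of rank $r_k$ on $\widetilde{C}$, which I will abbreviate $\M_k$. The key input is that the choice of $\A$ from Lemma \ref{h^0(A)} makes $H^1$ of all the relevant bundles vanish, so I can extract $h^0$ from an Euler characteristic computation.

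First I would recall the defining exact sequence from \ref{kernelbundle}, namely $0 \to \M_k \to V_k \otimes \O_{\widetilde{C}} \to \L_k \to 0$ where $V_k \subseteq H^0(\widetilde{C},\L_k)$ has dimension $r_k+1$ and $\deg \L_k = d_k$. Tensoring with $\A$ gives
\[
0 \to \M_k \otimes \A \to V_k \otimes \A \to \L_k \otimes \A \to 0.
\]
Taking the long exact sequence in cohomology, I get $0 \to H^0(\M_k \otimes \A) \to V_k \otimes H^0(\A) \to H^0(\L_k\otimes\A) \to H^1(\M_k\otimes\A) \to \cdots$. The second step is to argue that $H^1(\widetilde{C}, \M_k \otimes \A) = 0$: this follows exactly as in the proof of Lemma \ref{h^0(A)}, since $\M_k$ has a filtration with successive quotients line bundles of strictly negative degree, each bounded below by $r_k - d_k - 1$ by the same averaging argument, so each quotient tensored with $\A$ is non-special (degree $\geq g-1$) by the genericity of $\A$; splitting the filtration into short exact sequences then kills $H^1(\M_k \otimes \A)$. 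I would also note $H^1(\widetilde{C},\A) = 0$, already established in Lemma \ref{h^0(A)}, and likewise $H^1(\widetilde{C},\L_k\otimes\A)=0$ since $\deg(\L_k\otimes\A) = d_k + \deg\A \geq \deg\A \geq g-1$.

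With these vanishings the long exact sequence collapses to a short exact sequence of global sections $0 \to H^0(\M_k\otimes\A) \to V_k\otimes H^0(\A) \to H^0(\L_k\otimes\A)\to 0$, hence
\[
h^0_\A(\bm{e}_k) = (r_k+1)\,h^0(\widetilde{C},\A) - h^0(\widetilde{C},\L_k\otimes\A).
\]
The last step is to evaluate $h^0(\widetilde{C},\L_k\otimes\A)$ by Riemann-Roch: since $H^1$ vanishes, $h^0(\L_k\otimes\A) = \deg(\L_k\otimes\A) + 1 - g = d_k + \deg\A + 1 - g = d_k + h^0(\widetilde{C},\A)$, using the Riemann-Roch expression for $h^0(\widetilde C,\A)$ from Lemma \ref{h^0(A)}. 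Substituting, $h^0_\A(\bm{e}_k) = (r_k+1)h^0(\widetilde{C},\A) - d_k - h^0(\widetilde{C},\A) = -d_k + r_k h^0(\widetilde{C},\A)$, as claimed. I expect no serious obstacle here; the only point requiring care is confirming that the $\A$ produced in Lemma \ref{h^0(A)} is general enough to also force $H^1(\M_k\otimes\A)=0$ for every $k$, but this is immediate since that lemma already arranges non-speciality for all the relevant line-bundle quotients (the $\bm{e}_i+\bm{e}_j$ quotients dominate the single-factor $\bm{e}_k$ quotients in degree).
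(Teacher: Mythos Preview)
Your proof is correct and follows essentially the same approach as the paper: both arguments establish $H^1(\widetilde{C},\M_k\otimes\A)=0$ via the filtration from Lemma~\ref{h^0(A)} and then read off $h^0$ from an Euler-characteristic computation. The only difference is packaging---the paper applies Hirzebruch--Riemann--Roch directly to the rank-$r_k$ bundle $\M_k\otimes\A$ (computing $c_1(\M_k)=-d_k$ via $\det\M_k=\L_k^{-1}$), whereas you use the kernel-bundle exact sequence to reduce to ordinary Riemann--Roch on the line bundles $\A$ and $\L_k\otimes\A$; these are equivalent, and your version is arguably the more elementary of the two.
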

\begin{proof}
    By the Hirzebruch-Riemenn-Roch Theorem, we know that 
    \begin{align}
        h^0_\A(\bm{e}_k) &= h^1_\A(\bm{e}_k) + c_1(\M_k \otimes \A) + r_k(1-g)\nonumber \\
        &=  h^1_\A(\bm{e}_k) + c_1(\M_k) + r_kc_1(\A) + r_k(1-g) \label{HRRthm}
    \end{align} 
    Now, using the calculation in Lemma \ref{h^0(A)} and the fact that $\A$ is a line bundle,
    \begin{equation*}
        c_1(\A) = \deg \A = g + \max\Set*{d_i + d_j - r_i - r_j \given i \neq j} + 1
    \end{equation*}
    Lastly, we compute $c_1(\M_k)$. Since taking determinants is multiplicative across exact sequences,
    \begin{equation*}
        \det(\M_k) \otimes \L_k = \det(\O_{\widetilde{C}}^{\oplus r_k}) \hspace{0.1in} \implies \hspace{0.1in} \det(\M_k) = \L_k^{-1}.
    \end{equation*}
    Therefore, using that fact that $c_1(\M_k) = c_1(\det \M_k)$ \cite{FultonIntersectionTheory}*{Remark 3.2.3(c)} we find 
    \begin{equation*}
        c_1(\M_k) = c_1(\det \M_k) = c_1(\L_k^{-1}) = - \deg \L_k = -d_k.
    \end{equation*}
    Lastly, since $\A$ has been chosen to be much more positive than $\deg(\M_k) = -d_k$, we have $h^1(\bm{e}_k) = h^1(\M_k \otimes \A) = 0$. More precisely, one can find a filtration $F^\bullet$ of $\M_k$ exactly as in Lemma \ref{h^0(A)} to find that this vanishing occurs when $\deg \A$ larger than $g+d_k-r_k$, which is true of our choice of $\A$. Plugging all of these values back into (\ref{HRRthm}) gives the result.
\end{proof}

With the calculations from Lemmas \ref{h^0(A)} and \ref{h^0(e)} in hand, we can prove Proposition \ref{regularityCor}.

\begin{proof}[Proof of Proposition \ref{regularityCor}]
    Define $a \coloneqq h^0(\widetilde{C},\A)$ and let $\Fitt(p_*\A)\subseteq \O_{\P^\r}$ denote the zeroth Fitting ideal sheaf of $p_*\A$ computed from (\ref{linear presentation}), i.e. the image of $\wedge^{a}{u}$. Since $p_*\A$ is locally isomorphic to $\O_C$ except at the finitely many singular points of $C$, we know that $p_*\A$ is supported on $C$. This means that the subscheme defined by the sheaf of ideals $\Fitt(p_*\A)$ coincides set-theoretically with $C$ since 
    \[ \Supp(\O_{\Fitt(p_*\A)}) = \Supp(\coker u) = \Supp(p_*\A) \supseteq C\]
    Since $C$ is reduced, this implies that 
    \[ \Fitt(p_*\A) \subseteq \I_C,\]
    giving rise to the exact sequence
    \[ 0 \to \Fitt(p_*\A) \to \I_C \to \I_C/\Fitt(p_*\A) \to 0.\]
    Given that $\I_C/\Fitt(p_*\A)$ is supported on a finite set (the finitely many singular points of $C$), running the long exact sequence shows us that $\reg \I_C \subseteq \reg \Fitt(p_*\A)$. So the proof is finished after bounding $\reg \Fitt(p_*\A)$, which we can do using complex (\ref{FittingEN}).

    The complex (\ref{FittingEN}) is exact off of $C$, and has linear twist growth. By Theorem \ref{lineartoreg}, it remains to verify that the regularity region of the first term in complex (\ref{FittingEN}) is $(a_1,\dots, a_n) + \N^n$ where $a_k = \min\{-d_k+r_ka, a\}$. The regularity region of this term can be computed very explicitly:
    \begin{align*}
        \reg\left(\bigoplus_{|\m| = a} \O_{\P^\r}(-\m)^{\oplus M(\m)} \right) &= \bigcap_{|\m|=a} \reg \O_{\P^\r}(-\m)^{\oplus M(\m)}\\
        &= \bigcap_{\substack{|\m|=a\\ \M(\m) \neq 0}} \m + \N^n.
    \end{align*}

    This intersection will be a region of the form $\bm{n} +\N^n$ where $\bm{n}$ is the componentwise maximum of vectors in $S=\Set*{\m \given M(\m)\neq 0 \text{ and } |\m| = a}$, the corners of the regions appearing in the intersection. Let $s_k$ denote the $k^{\text{th}}$ component maximum of $S$. In the event that $a\bm{e}_k$ is in $S$, we have $s_k = a$. It only remains to show that in the case  $a\bm{e}_k$ is not in $S$, this maximum is $s_k=h^0_\A(\bm{e}_k) = -d_k +r_ka$.

    The ranks $M(\m)$ in this complex are determined by wedge products of the source and target of $u$:
    \[ M(\m) =  \begin{pmatrix} h^0_{\A}(\bm{e}_1) \\ m_1 \end{pmatrix}\cdots \begin{pmatrix} h^0_{\A}(\bm{e}_n) \\ m_n \end{pmatrix}. \]
    Therefore $M(\m)$ vanishes if and only if there is an index $1\leq j\leq n$ such that $m_j > h^0_\A(\bm{e}_j)$. Given that $ae_k$ is not in $S$, we know that $M(a\bm{e}_k)=0$ and thus $s_k \leq h^0_\A(\bm{e}_k) < a$. To verify that in fact $s_k = h^0_\A(\bm{e}_k)$, it suffices to show that we can find $\m$ with $m_k= h^0_\A(\bm{e}_k)$ satisfying $M(\m) \neq 0$ and $|\m|=a$.

    Without loss of generality, letting $k=1$, this means we need to find $\m$ such that 
    \begin{equation*}
        M(\m) = \begin{pmatrix} h_\A^0(\bm{e}_2) \\ m_2 \end{pmatrix}\cdots \begin{pmatrix} h_\A^0(\bm{e}_n) \\ m_n \end{pmatrix}.
    \end{equation*}
    It is possible to choose $\m$ so that $M(\m) \neq 0$ as long as the sum of the lower entries in the binomial coefficients is less than the sum of the top entries. Using $|\m|=a$, this is equivalent to checking $a\leq \sum_{j=1}^n h^0_\A(\bm{e}_j)$.

    Taking the alternating sum of ranks from the linear presentation (\ref{linear presentation}) gives
    \begin{equation*}
        a = \sum_{j=1}^n h_\A^0(\bm{e}_j) + 1 - \rank(\ker u).
    \end{equation*}
    If $\rank(\ker u)> 0$ then we are done. Otherwise, if $\rank(\ker u)=0$ then the linear presentation (\ref{linear presentation}) was actually a resolution of $p_*\A$. This means that the support of $p_*\A$ has codimension $\leq 1$. Since $C$ has dimension $1$, we know $\dim \P^{\r} = 2$ and so the only possibility when $n>1$ is $\P^\r = \P^1\times \P^1$, which has been excluded by assumption.
\end{proof}

A counterexample to the case of $\P^1\times \P^1$ is given in Example \ref{LozovanuExample}. Although Proposition \ref{regularityCor} considers a fairly general class of curves in $\P^\r$, the regularity bound $(a_1,\dots, a_n)+\N^n$ is far from optimal -- for example, the regularity bound is dominated by $a=h^0(\widetilde{C},\A)$, which is invariant with respect to exchanging the coordinates of the product $\P^\r$ even though regularity is not. Given other resolutions and precise results linking resolutions to regularity, the bound in Proposition \ref{regularityCor} may potentially be improved. 

\begin{remark}
    As in \cite{gruson1983theorem}, only slight modifications of the previous results are necessary to give a bound on $C$ that is reduced but possibly reducible. This proceeds by breaking $C$ up into irreducible components and applying the lemma component by component.
\end{remark}

\section{Examples}\label{Examples}

\begin{example}\label{standardexample}
    Let $\I_C$ be the ideal sheaf corresponding to \[ \left\langle x_0^2y_0^2+x_1^2y_1^2+x_0x_1y_2^2, x_0^3y_2+x_1^3(y_0+y_1) \right\rangle \] defining a degree $(2,8)$ smooth hyperelliptic curve $C$ of genus 4 embedded into $\P^1\times\P^2$. Applying Proposition \ref{regularityCor} tells us that $(7,9)+\N^2\subseteq \reg \I_C$, which can be confirmed using \cite{SayrafiBruceMultigradedRegularity}*{Theorem A}. We can not apply \cite{Lozovanu2008RegularityOS}*{Theorem A} since one of the factors is $\P^1$. The actual regularity region $\reg \I_C$, as computed in \textit{Macaulay2}, is depicted in Figure \ref{standardexamplefigure}.
\end{example}

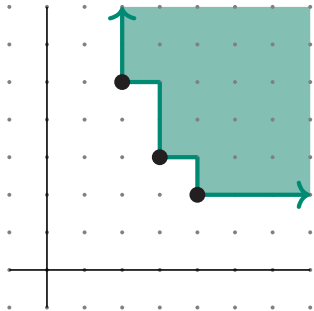
\begin{figure}
    \newcommand{\makegrid}{
    \foreach \x in {-1,...,7}
    \foreach \y in {-1,...,7}
        { \fill[gray,fill=gray] (\x,\y) circle (1.5pt); }
    \draw[-,  semithick] (-1,0)--(7,0);
    \draw[-,  semithick] (0,-1)--(0,7);
    }
    \begin{tikzpicture}[scale=0.5]
        \path[fill=PineGreen!45] (2,5)--(2,7)--(7,7)--(7,2)--(4,2)--(4,3)--(3,3)--(3,5)--(2,5);
        \makegrid
    
        \draw[->, ultra thick,PineGreen] (2,5)--(2,7);
        \draw[-, ultra thick,PineGreen] (2,5)--(3,5);
        \draw[-, ultra thick,PineGreen] (3,5)--(3,3);
        \draw[-, ultra thick,PineGreen] (3,3)--(4,3);
        \draw[-, ultra thick,PineGreen] (4,3)--(4,2);
        \draw[->, ultra thick,PineGreen] (4,2)--(7,2);
        \fill[TealBlue,fill=Black] (2,5) circle (6pt);
        \fill[TealBlue,fill=Black] (3,3) circle (6pt);
        \fill[TealBlue,fill=Black] (4,2) circle (6pt);
        \end{tikzpicture}
    \caption{Depicts $\reg \I_C$ of the curve defined in Example \ref{standardexample}.} \label{standardexamplefigure}
\end{figure}

\begin{example}
    Let $\P^1$ embed into $\P^{d_1} \times \cdots \times \P^{d_n}$ by the $d_i$-uple embedding in each factor. Then Theorem \ref{maintheorem} tells us that $C$ is $(2,\dots, 2)$-regular, while the actual regularity is $\bm{1} + \N^n$.
\end{example}

\begin{example}(Counterexample for $\P^1 \times \P^1$)\label{LozovanuExample}
    Let $\I_C$ be the ideal sheaf corresponding to \[ \left\langle x_0^3y_0^2+x_1^3y_1^2+x_0x_1^2y_0y_1 \right\rangle, \] defining a smooth curve $C$ of genus 2 and degree $(3,2)$ embedded into $\P^1\times\P^1$. Since this is a hypersurface, the ideal sheaf is $\I_C = \O_{\P^{1} \times \P^1}(-3,-2)$ and has regularity $(3,2) + \N^2$. However, both Proposition \ref{regularityCor} and \cite{Lozovanu2008RegularityOS} would say that $C$ is $(2,3)$-regular, which is false. 
\end{example}

\bibliography{bib}{}

\end{document}